\documentclass[12pt,twoside]{amsart}
\usepackage{amsmath, amsthm, amscd, amsfonts, amssymb, graphicx}
\let
\oldsection
\section
\renewcommand{\section}{\vspace{8pt plus 4pt}\oldsection}
\usepackage{lipsum}
\usepackage[super]{nth}
\usepackage{xcolor}
\definecolor{olive}{rgb}{0.3, 0.4, .1}
\definecolor{fore}{RGB}{249,242,215}
\definecolor{back}{RGB}{51,51,51}
\definecolor{title}{RGB}{255,0,90}
\definecolor{dgreen}{rgb}{0.,0.6,0.}
\definecolor{gold}{rgb}{1.,0.84,0.}
\definecolor{JungleGreen}{cmyk}{0.99,0,0.52,0}
\definecolor{BlueGreen}{cmyk}{0.85,0,0.33,0}
\definecolor{RawSienna}{cmyk}{0,0.72,1,0.45}
\definecolor{Magenta}{cmyk}{0,1,0,0}

\setcounter{page}{1}
\newtheorem{thm}{Theorem}[section]

\newtheorem{corollary}[thm]{Corollary}
\newtheorem{lemma}[thm]{Lemma}

\theoremstyle{definition}
\newtheorem{defn}{Definition}[section]

\theoremstyle{remark}
\newtheorem{rem}{Remark}[section]

\numberwithin{equation}{section}
\linespread{1.0}
\setlength{\paperwidth}{210mm}
\setlength{\paperheight}{270mm}
\setlength{\oddsidemargin}{0mm}
\setlength{\evensidemargin}{0mm}
\setlength{\topmargin}{-20mm}
\setlength{\headheight}{10mm}
\setlength{\headsep}{13mm}
\setlength{\textwidth}{160mm}
\setlength{\textheight}{240mm}
\setlength{\footskip}{15mm}
\setlength{\marginparwidth}{0mm}
\setlength{\marginparsep}{0mm}
\begin{document}
\begin{center}\large{{\bf{ Generalized Geometric Difference Sequence Spaces and its duals}}} 
\vspace{0.5cm}

Khirod Boruah$^{1}$ and Bipan Hazarika$^{1,\ast}$ and Mikail Et$^{2}$ 

\vspace{0.5cm}
$^{1}$
Department of Mathematics, Rajiv Gandhi University, Rono Hills, Doimukh-791112, Arunachal Pradesh, India\\
$^{2}$Department of Mathematics, Firat University, 23119, Elazig, Turkey\\

Email: khirodb10@gmail.com; bh\_rgu@yahoo.co.in; mikailet68@gmail.com
\thanks{$^{\ast}$ The corresponding author.}
\end{center}
\title{}
\author{}
\thanks{{24-11-2015}}
\begin{abstract} After introducing the geometric difference sequence spaces in the paper \cite{KhirodBipan}, objective of this paper is to introduce the generalized geometric difference sequence spaces
 $l_\infty^{G}(\Delta^m_G), c^G(\Delta^m_G), c_0^{G}(\Delta^m_G)$
 and to prove that these are Banach spaces. Then we prove some inclusion properties. Also we compute their dual spaces.

\parindent=5mm
\noindent{\footnotesize {\bf{Keywords and phrases:}}} Geometric difference; dual space; geometric integers; geometric complex numbers.\\
{\footnotesize {\bf{AMS subject classification \textrm{(2000)}:}}} 26A06, 11U10, 08A05, 46A45.

\end{abstract}
\maketitle

\maketitle

\pagestyle{myheadings}
\markboth{\scriptsize  K. Boruah, B. Hazarika, M Et}
        {\scriptsize  On Some Generalized Geometric Difference Sequence Spaces}

\maketitle\vspace{-0.4cm}
\section{Introduction}
After the introduction to the "Non-Newtonian Calculus'' by Grossman and Katz \cite{GrossmanKatz}  which is also called as multiplicative calculus, various researchers have been developing its dimensions. The operations of multiplicative calculus is called as multiplicative derivative and multiplicative integral. We refer to Grossman and Katz \cite{GrossmanKatz}, Stanley \cite{Stanley}, Bashirov et al. \cite{BashirovMisirh,BashirovKurpinar}, Grossman  \cite{Grossman83} for elements of multiplicative calculus and
its applications. An extension of multiplicative calculus to functions of complex variables is
handled in Bashirov and R\i za \cite{BashirovRiza}, Uzer \cite{Uzer10}, Bashirov et al. \cite{BashirovKurpinar}, \c{C}akmak and Ba\c{s}ar \cite{CakmakBasar}, Tekin and Ba\c{s}ar\cite{TekinBasar}, T\"{u}rkmen and Ba\c{s}ar \cite{TurkmenBasar}. In \cite{KadakEfe, kadak2} Kadak et al studied the new types of sequences spaces over non-Newtonian Calculus and proved some interesting results. Kadak \cite{Kadak} determined the K\"{o}the-Toeplitz duals over non-Newtonian Complex Field.

Geometric calculus is an alternative to the usual calculus of Newton and Leibniz. It provides differentiation and integration tools based on multiplication instead of addition. Every property in Newtonian calculus has an analog in multiplicative calculus. Generally speaking multiplicative calculus is a methodology that allows one to have a different look at problems which can be investigated via calculus. In some cases, for example for growth related problems, the use of multiplicative calculus is advocated instead of a traditional Newtonian one.

For readers' convenience, it is to be remind that all concepts in classical arithmetic have natural counterparts in $\alpha-arithmetic.$ Consider any generator $\alpha$ with range $A\subseteq \mathbb{C}.$ By $\alpha- arithmetic,$ we mean the arithmetic whose domain is $A$ and operations are defined as follows, 
for $x, y \in A$ and any generator $\alpha,$
\begin{align*}
&\alpha -addition &x\dot{+}y &=\alpha[\alpha^{-1}(x) + \alpha^{-1}(y)]\\
&\alpha-subtraction &x\dot{-}y&=\alpha[\alpha^{-1}(x) - \alpha^{-1}(y)]\\
&\alpha-multiplication &x\dot{\times}y &=\alpha[\alpha^{-1}(x) \times \alpha^{-1}(y)]\\
&\alpha-division &\dot{x/y}&=\alpha[\alpha^{-1}(x) / \alpha^{-1}(y)]\\
&\alpha-order &x\dot{<}y &\Leftrightarrow \alpha^{-1}(x) < \alpha^{-1}(y).
\end{align*}
If we choose ``\textit{$exp$}'' as an $\alpha-generator$ defined by $\alpha (z)= e^z$ for $z\in \mathbb{C}$ then $\alpha^{-1}(z)=\ln z$ and $\alpha-arithmetic$ turns out to geometric arithmetic.
\begin{align*}
&\alpha -addition &x\oplus y &=\alpha[\alpha^{-1}(x) + \alpha^{-1}(y)]& = e^{(\ln x+\ln y)}& =x.y ~geometric ~addition\\
&\alpha-subtraction &x\ominus y&=\alpha[\alpha^{-1}(x) - \alpha^{-1}(y)]&= e^{(\ln x-\ln y)} &=  x\div y, y\ne 0 ~geometric ~subtraction\\
&\alpha-multiplication &x\odot y &=\alpha[\alpha^{-1}(x) \times\alpha^{-1}(y)]& = e^{(\ln x\times\ln y)} & = ~x^{\ln y}~ geometric ~multiplication\\
&\alpha-division &x\oslash y&=\alpha[\alpha^{-1}(x) / \alpha^{-1}(y)] & = e^{(\ln x\div \ln y)}& = x^{\frac{1}{\ln y}}, y\ne 1 ~ geometric ~division.
\end{align*}
In \cite{TurkmenBasar} T\"{u}rkmen and F. Ba\c{s}ar defined the geometric complex numbers $\mathbb{C}(G)$ as follows:
\[\mathbb{C}(G):=\{ e^{z}: z\in \mathbb{C}\} = \mathbb{C}\backslash \{0\}.\]
Then $(\mathbb{C}(G), \oplus, \odot)$ is a field with geometric zero $1$ and geometric identity $e.$\\
Then for all $x, y\in \mathbb{C}(G)$
\begin{itemize}
\item{ $x\oplus y=xy$}
\item{ $x\ominus y=x/y$}
\item{ $x\odot y=x^{\ln y}=y^{\ln x}$}
\item{ $x\oslash y$ or $\frac{x}{y}G=x^{\frac{1}{\ln y}}, y\neq 1$}
\item{$x_1 \oplus x_2 \oplus...\oplus x_n=_G\sum_{i=1}^n x_i=x_1.x_2...x_n$}
\item{ $x^{2_G}= x \odot x=x^{\ln x}$}
\item{ $x^{p_G}=x^{\ln^{p-1}x}$}
\item{ ${\sqrt{x}}^G=e^{(\ln x)^\frac{1}{2}}$}
\item{ $x^{-1_G}=e^{\frac{1}{\log x}}$}
\item{ $x\odot e=x$ and $x\oplus 1= x$}
\item{ $e^n\odot x=x^n=x\oplus x\oplus .....(\text{upto $n$ number of $x$})$}
\item{
\begin{equation*}
\left|x\right|^G=
\begin{cases}
x, &\text{if $x>1$}\\
1,&\text{if $x=1$}\\
\frac{1}{x},&\text{if $x<1$}
\end{cases}
\end{equation*}}
Thus $\left|x\right|^G\geq 1.$
\item{ ${\sqrt{x^{2_G}}}^G=\left|x\right|^G$}
\item{ $\left|e^y\right|^G=e^{\left|y\right|}$}
\item{ $\left|x\odot y\right|^G=\left|x\right|^G \odot \left|y\right|^G$}
\item{ $\left|x\oplus y\right|^G \leq\left|x\right|^G \oplus \left|y\right|^G$}
\item{ $\left|x\oslash y\right|^G=\left|x\right|^G \oslash \left|y\right|^G$}
\item{ $\left|x\ominus y\right|^G\geq\left|x\right|^G \ominus \left|y\right|^G$}
\item{ $0_G \ominus 1_G\odot\left(x \ominus y\right)=y\ominus x\,, i.e.$ in short $\ominus \left(x \ominus y\right)= y\ominus x.$}
\end{itemize}

 Let $l_{\infty},c$ and $c_0$ be the linear spaces of complex bounded, convergent and null  sequences, respectively, normed by
\[||x||_\infty=\sup_k|x_k|.\]

 T\"{u}rkmen and Ba\c{s}ar \cite{TurkmenBasar}  have proved that
\[\omega(G)=\{(x_k): x_k \in \mathbb{C}(G)\, \text{for all}\, k\in \mathbb{N}\}\]
is a vector space over $\mathbb{C}(G)$ with respect to the algebraic operations $\oplus$ addition and $\odot$ multiplication 
\begin{align*}
\oplus : \omega(G) \times \omega (G) &\rightarrow \omega (G)\\
                          (x, y)&\rightarrow x \oplus y =(x_k) \oplus (y_k)=(x_ky_k)\\
\odot : \mathbb{C(G)} \times \omega (G) &\rightarrow \omega (G)\\
													(\alpha, y)&\rightarrow \alpha \odot y=\alpha \odot (y_k)=(\alpha^{\ln y_k}),
\end{align*}
where $x=(x_k), y=(y_k) \in \omega (G)$ and $\alpha \in \mathbb{C}(G).$ Then
\begin{align*}
l_\infty(G) &=\{x=(x_k) \in \omega (G): \sup_{k\in \mathbb{N}}|x_k|^G< \infty\}\\
c(G)        &= \{x=(x_k) \in \omega (G): {_G\lim_{k\rightarrow \infty}}|x_k\ominus l|^G=1\}\\
c_0(G)      &= \{x=(x_k) \in \omega (G): {_G\lim_{k\rightarrow \infty}} x_k=1\}\\
l_p(G)      &= \{x=(x_k) \in \omega (G):{_G\sum^\infty_{k=0}}\left(|x_k|^G\right)^{p_G} <\infty\}, \text{~where ${_G\sum}$ is the geometric sum},
\end{align*}
are classical sequence spaces over the field $\mathbb{C}(G).$ Also they have shown that $l_{\infty}(G),$ $c(G)$ and $c_0(G)$ are Banach spaces with the norm 
\[||x||^{G}=\sup_{k}|x_k|^{G}, x=(x_1,x_2,x_3,...)\in \lambda(G), \lambda\in \{l_{\infty},c, c_0\}.\] Here, $_G\lim$ is the geometric limit defined in \cite{TurkmenBasar}(page no. $27$).
For the convenience, we denoted $l_\infty(G), c(G), c_0(G),$ respectively as $l_\infty^G, c^G, c_0^G.$

In 1981, Kizmaz \cite{Kizmaz} introduced the notion of difference sequence spaces using forward difference operator $\Delta$ and studied the classical difference sequence spaces $\ell _{\infty }(\Delta ),$ $c(\Delta
),$ $c_{0}(\Delta ).$ Following C. T\"{u}rkmen and F. Ba\c{s}ar \cite{TurkmenBasar}, Kizmaz \cite{Kizmaz},  we defined geometric sequence space in \cite{KhirodBipan} as follows:
\[l_\infty^G(\Delta_G)= \{x=(x_k) \in \omega (G): \Delta_G x\in l_\infty^G\},
\text{~where~} {\Delta}_G x=x_k \ominus x_{k+1}.\] where $\Delta_G x= (\Delta_G x_k)= (x_k \ominus x_{k+1}).$ Then we introduced some theorems, definitions and  basic results as follows:
\begin{thm}\label{eight} The space  $l_\infty^{G} \left({\Delta}_G\right)$
is a normed linear space w.r.t. the norm
\begin{equation}\label{eqn44}
\left\|x\right\|^G_{{\Delta}_G}=\left|x_1\right|^G\oplus\left\|{\Delta}_Gx\right\|^G_\infty.
\end{equation}
\end{thm}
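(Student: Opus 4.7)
The strategy is to verify in turn the four norm axioms (positivity, definiteness, homogeneity, triangle inequality), making systematic use of the identities for $|\cdot|^G$ and $\oplus, \odot$ collected in the preliminary list. The underlying idea throughout is that geometric arithmetic is isomorphic to ordinary arithmetic via $\alpha(z)=e^z$, so every step has a classical analogue; one only has to be careful to phrase things in the $G$-notation.

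First I would record the two identities that reduce everything to componentwise statements. For any $x,y\in\omega(G)$,
\[
\Delta_G(x\oplus y)_k=(x_ky_k)\ominus(x_{k+1}y_{k+1})=(x_k\ominus x_{k+1})\oplus(y_k\ominus y_{k+1})=\Delta_Gx_k\oplus\Delta_Gy_k,
\]
and for any $\alpha\in\mathbb C(G)$,
\[
\Delta_G(\alpha\odot x)_k=(\alpha\odot x_k)\ominus(\alpha\odot x_{k+1})=\alpha\odot(x_k\ominus x_{k+1})=\alpha\odot\Delta_Gx_k,
\]
both of which follow by rewriting $\oplus,\ominus,\odot$ in terms of ordinary arithmetic of logarithms. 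These two identities confirm that $l_\infty^G(\Delta_G)$ is a vector subspace of $\omega(G)$, using that $|\Delta_G(x\oplus y)_k|^G\le|\Delta_Gx_k|^G\oplus|\Delta_Gy_k|^G$ and $|\Delta_G(\alpha\odot x)_k|^G=|\alpha|^G\odot|\Delta_Gx_k|^G$ are both bounded whenever $\Delta_Gx,\Delta_Gy$ are.

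Next I would check the norm axioms. Positivity $\|x\|^G_{\Delta_G}\ge1$ is immediate because $|x_1|^G\ge1$ and $\|\Delta_Gx\|^G_\infty\ge1$, and geometric addition of quantities $\ge 1$ stays $\ge 1$. For definiteness, $\|x\|^G_{\Delta_G}=1$ forces $|x_1|^G=1$ and $\|\Delta_Gx\|^G_\infty=1$; the first gives $x_1=1$, and the second gives $x_k\ominus x_{k+1}=1$ for every $k$, i.e.\ $x_k=x_{k+1}$, whence $x_k=1$ for all $k$, so $x$ equals the geometric zero sequence. Homogeneity is
\[
\|\alpha\odot x\|^G_{\Delta_G}=|\alpha\odot x_1|^G\oplus\|\alpha\odot\Delta_Gx\|^G_\infty=|\alpha|^G\odot|x_1|^G\oplus|\alpha|^G\odot\|\Delta_Gx\|^G_\infty,
\]
where I use $|\alpha\odot y|^G=|\alpha|^G\odot|y|^G$, and $\sup_k(|\alpha|^G\odot|\Delta_Gx_k|^G)=|\alpha|^G\odot\sup_k|\Delta_Gx_k|^G$; the latter holds because $c\odot t=c^{\ln t}$ is monotone in $t\ge1$ when $c\ge1$. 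Distributivity $a\odot(b\oplus c)=(a\odot b)\oplus(a\odot c)$, which reduces to $a^{\ln(bc)}=a^{\ln b}a^{\ln c}$, then yields $\|\alpha\odot x\|^G_{\Delta_G}=|\alpha|^G\odot\|x\|^G_{\Delta_G}$. Finally, for the triangle inequality, I combine $|x_1\oplus y_1|^G\le|x_1|^G\oplus|y_1|^G$ with the bound
\[
\|\Delta_G(x\oplus y)\|^G_\infty=\sup_k|\Delta_Gx_k\oplus\Delta_Gy_k|^G\le\|\Delta_Gx\|^G_\infty\oplus\|\Delta_Gy\|^G_\infty,
\]
and rearrange using commutativity/associativity of $\oplus$ to obtain $\|x\oplus y\|^G_{\Delta_G}\le\|x\|^G_{\Delta_G}\oplus\|y\|^G_{\Delta_G}$.

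None of these steps is genuinely deep; the only item that needs care is the interaction of $\odot$ with the supremum in the homogeneity check, since $\odot$ is not scalar multiplication in the ordinary sense. I would handle that by explicitly reducing to monotonicity of $t\mapsto c^{\ln t}$ on $[1,\infty)$, which is where the condition $|\alpha|^G\ge1$ is used. Everything else is direct bookkeeping with the identities listed in the introduction.
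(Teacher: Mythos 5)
Your verification is correct: the paper itself states Theorem \ref{eight} without proof (it is imported from the earlier work \cite{KhirodBipan}), and your axiom-by-axiom check is the natural argument one would give. All the steps hold — in particular you rightly isolate the only delicate point, that $\sup_k\bigl(|\alpha|^G\odot|\Delta_Gx_k|^G\bigr)=|\alpha|^G\odot\sup_k|\Delta_Gx_k|^G$ follows from monotonicity of $t\mapsto c^{\ln t}$ on $[1,\infty)$ for $c=|\alpha|^G\ge 1$ — and the rest (positivity, definiteness via $x_1=1$ together with $x_k=x_{k+1}$, and the triangle inequality from $|u\oplus v|^G\le|u|^G\oplus|v|^G$) is exactly the bookkeeping the geometric identities are designed for.
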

\begin{thm} The space $l_\infty^{G} \left({\Delta}_G\right)$
is a Banach space w.r.t. the norm $\left\|.\right\|^G_{{\Delta}_G}.$
\end{thm}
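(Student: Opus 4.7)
The plan is to reduce completeness of $l_\infty^G(\Delta_G)$ to the already-established completeness of two simpler objects: the field $\mathbb{C}(G)$ under the geometric modulus $|\cdot|^G$, and the space $l_\infty^G$ under $\|\cdot\|^G_\infty$ (the latter recorded earlier from \cite{TurkmenBasar}). Given a Cauchy sequence $(x^{(n)})_{n\in\mathbb{N}}$ in $l_\infty^G(\Delta_G)$, I would first unpack the Cauchy condition via the defining identity
\[
\|x^{(n)}\ominus x^{(m)}\|^G_{\Delta_G}=|x_1^{(n)}\ominus x_1^{(m)}|^G\oplus\|\Delta_G x^{(n)}\ominus \Delta_G x^{(m)}\|^G_\infty.
\]
Since each geometric modulus lies in $[1,\infty)$ and the geometric zero is $1$, both factors on the right must individually tend to $1$ as $m,n\to\infty$. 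Hence $(x_1^{(n)})$ is Cauchy in $\mathbb{C}(G)$ and $(\Delta_G x^{(n)})$ is Cauchy in $l_\infty^G$.

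Next, I would invoke completeness of these two spaces. There exists $x_1\in\mathbb{C}(G)$ with $x_1^{(n)}\to x_1$, and there exists $y=(y_k)\in l_\infty^G$ with $\Delta_G x^{(n)}\to y$ in the $\|\cdot\|^G_\infty$-norm; in particular $\Delta_G x_k^{(n)}\to y_k$ coordinatewise for every $k$. I would then assemble the candidate limit $x=(x_k)$ by the recursion $x_{k+1}:=x_k\ominus y_k$, starting from the $x_1$ just obtained. By construction $\Delta_G x_k = x_k\ominus x_{k+1}=y_k$, so $\Delta_G x=y\in l_\infty^G$, placing $x\in l_\infty^G(\Delta_G)$.

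To finish, I would show $x^{(n)}\to x$ in the $\|\cdot\|^G_{\Delta_G}$-norm by bounding
\[
\|x^{(n)}\ominus x\|^G_{\Delta_G}=|x_1^{(n)}\ominus x_1|^G\oplus\|\Delta_G x^{(n)}\ominus y\|^G_\infty,
\]
where the first factor tends to $1$ by choice of $x_1$ and the second does so by choice of $y$. A brief side induction, using continuity of $\ominus$ in $\mathbb{C}(G)$ (which is ordinary division on $\mathbb{C}\setminus\{0\}$), confirms that the coordinatewise limits of the $x_k^{(n)}$ genuinely coincide with the recursively defined $x_k$, ensuring internal consistency.

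The main obstacle, in my view, is the bookkeeping of that reconstruction step: while in the classical Kizmaz setting \cite{Kizmaz} the additive difference operator makes $x_{k+1}^{(n)}=x_k^{(n)}-\Delta x_k^{(n)}$ pass to the limit transparently, here one must verify the multiplicative analogue with $\ominus$ (division) and check that the inductive definition of $x_k$ from $x_1$ and the $y_k$'s is indeed compatible with the termwise limits of $x_k^{(n)}$. All remaining steps are routine once continuity of the geometric arithmetic operations and the two auxiliary completeness results are in hand.
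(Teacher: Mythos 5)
Your proof is correct, but it proceeds differently from the paper's argument (the paper proves this statement only in its generalized form, for $l_\infty^G(\Delta_G^m)$, and the $m=1$ case is what is being recalled here). The paper's route is coordinatewise: from the Cauchy condition in $\|\cdot\|^G_{\Delta_G}$ it extracts that each coordinate sequence $(x_k^{(n)})_n$ is Cauchy in $\mathbb{C}(G)$, defines $x_k=\lim_n x_k^{(n)}$, lets $l\to\infty$ in the Cauchy estimate to get norm convergence, and then needs a separate triangle-inequality step, comparing $\Delta_G x_k$ with $\Delta_G x^{(N)}_k$, to verify that the limit actually lies in $l_\infty^G(\Delta_G)$. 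You instead split the norm into the two factors $|x_1^{(n)}\ominus x_1^{(m)}|^G$ and $\|\Delta_G x^{(n)}\ominus\Delta_G x^{(m)}\|^G_\infty$ (legitimate, since every geometric modulus is $\geq 1$ and $\oplus$ is multiplication), invoke completeness of $\mathbb{C}(G)$ and of $l_\infty^G$ separately, and rebuild the limit by inverting the difference operator via $x_{k+1}=x_k\ominus y_k$. This buys you two things: membership $x\in l_\infty^G(\Delta_G)$ is automatic because $\Delta_G x=y$ by construction, and the argument makes explicit the isometric identification of $l_\infty^G(\Delta_G)$ with $\mathbb{C}(G)\times l_\infty^G$, which the paper only gestures at later through the operator $D$ and the homomorphism $\Delta_G^m$. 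What the paper's approach buys in exchange is mechanical extendability to order $m$, where your reconstruction would require inverting an $m$-th order difference from $m$ initial coordinates --- more bookkeeping, though still routine. Your closing consistency check (that the recursively defined $x_k$ agree with the coordinatewise limits) is harmless but not actually needed for completeness; the norm convergence to your recursively defined $x$ already finishes the proof.
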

\begin{rem}
The spaces \begin{enumerate}
\item[(a)] $c^{G}(\Delta_{G})=\{(x_k)\in w(G): \Delta_{G}x_k\in c^{G}\}$
\item[(b)] $c_{0}^{G}(\Delta_{G})=\{(x_k)\in w(G): \Delta_{G}x_k\in c_{0}^{G}\}$
\end{enumerate} 
 are Banach spaces with respect to the norm $||.||^{G}_{\Delta_G}.$ Also these spaces are BK-spaces.
\end{rem}
\begin{lemma}\label{1}
The following conditions (a) and (b) are equivalent:
\begin{align*}
&(a) \sup_k|x_k\ominus x_{k+1}|^G<\infty ~~ i.e. ~~ \sup_k|\Delta_G x_k|^G<\infty;\\
&(b)(i) \sup_k e^{k^{-1}}\odot|x_k|^G<\infty \text{~and}\\
& \quad (ii)\sup_k|x_k\ominus e^{{k(k+1)}^{-1}}\odot x_{k+1}|^G<\infty. 
\end{align*}
\end{lemma}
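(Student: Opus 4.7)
The plan is to reduce the lemma to Kizmaz's classical equivalence for ordinary difference sequences by taking logarithms. Writing $y_k = \ln x_k$ and using the identities from the bulleted list (in particular $|u|^G = e^{|\ln u|}$, $a \odot b = b^{\ln a}$, and $u \ominus v = u/v$), I would first translate each quantity:
\begin{align*}
|x_k \ominus x_{k+1}|^G &= e^{|y_k - y_{k+1}|},\\
e^{k^{-1}} \odot |x_k|^G &= e^{|y_k|/k},\\
\bigl|x_k \ominus e^{k(k+1)^{-1}} \odot x_{k+1}\bigr|^G &= e^{\left|y_k - \frac{k}{k+1}y_{k+1}\right|}.
\end{align*}
Thus the lemma becomes the classical statement that $\sup_k|y_k-y_{k+1}|<\infty$ is equivalent to the conjunction of $\sup_k |y_k|/k<\infty$ and $\sup_k|y_k-\tfrac{k}{k+1}y_{k+1}|<\infty$, which is Kizmaz's original lemma applied to $(y_k)$.

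For the forward direction (a) $\Rightarrow$ (b), I would set $M=\sup_k|y_k-y_{k+1}|$ and use the telescoping bound $|y_k|\le |y_1|+(k-1)M$, giving $|y_k|/k\le |y_1|+M$ and hence (b)(i). For (b)(ii), I would write
\[
\left|y_k - \tfrac{k}{k+1}y_{k+1}\right| \le |y_k-y_{k+1}| + \tfrac{1}{k+1}|y_{k+1}| \le M + \tfrac{k+1}{k+1}\cdot\tfrac{|y_{k+1}|}{k+1},
\]
which is bounded by (b)(i) already established. For the reverse direction (b) $\Rightarrow$ (a), the symmetric decomposition
\[
|y_k-y_{k+1}| \le \left|y_k - \tfrac{k}{k+1}y_{k+1}\right| + \tfrac{1}{k+1}|y_{k+1}|
\]
combined with (b)(i) and (b)(ii) yields (a).

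The only genuine work is bookkeeping: confirming that each geometric expression correctly translates under $y_k=\ln x_k$, in particular verifying that $e^{k^{-1}}\odot|x_k|^G$ is really $(|x_k|^G)^{1/k}=e^{|y_k|/k}$ and that the geometric scalar multiplication $e^{k/(k+1)}\odot x_{k+1}$ is $x_{k+1}^{k/(k+1)}$ so that dividing inside $|\cdot|^G$ produces the target quotient. Once these translations are in place, the proof is exactly the classical Kizmaz argument, and the main (mild) obstacle is presenting the correspondence cleanly enough that the reader can see the geometric inequalities as faithful analogues of the additive ones.
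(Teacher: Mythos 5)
Your proposal is correct, and the translation table is accurate: with $y_k=\ln x_k$ one indeed has $|x_k\ominus x_{k+1}|^G=e^{|y_k-y_{k+1}|}$, $e^{k^{-1}}\odot|x_k|^G=e^{|y_k|/k}$, and $|x_k\ominus e^{k(k+1)^{-1}}\odot x_{k+1}|^G=e^{|y_k-\frac{k}{k+1}y_{k+1}|}$, so the lemma is exactly Kizmaz's classical Lemma~1 conjugated by the generator $\exp$. The paper does not take this route: it works natively in the geometric arithmetic, telescoping $x_1\ominus x_{k+1}={_G\sum_{v=1}^{k}}\Delta_G x_v$ to get $|x_k|^G=O(e^k)$ for (b)(i), and then using the identity $x_k\ominus e^{k(k+1)^{-1}}\odot x_{k+1}=e^{k(k+1)^{-1}}\odot(x_k\ominus x_{k+1})\oplus e^{(k+1)^{-1}}\odot x_k$ together with the geometric triangle inequality in both directions; in additive terms that is the decomposition $y_k-\tfrac{k}{k+1}y_{k+1}=\tfrac{k}{k+1}(y_k-y_{k+1})+\tfrac{1}{k+1}y_k$, a harmless variant of your $(y_k-y_{k+1})+\tfrac{1}{k+1}y_{k+1}$. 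Your reduction buys transparency and economy (the result is seen to be the classical one transported through the field isomorphism, so nothing geometric needs to be re-derived), while the paper's version buys self-containedness within the geometric-calculus framework, which matters for its program of developing the theory intrinsically. One caveat you should make explicit: the identity $|u|^G=e^{|\ln u|}$ and the whole log-translation presuppose that the $x_k$ are positive reals (or that a branch of the logarithm is fixed); the paper carries the same implicit restriction, so this is a presentational point rather than a gap.
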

\begin{lemma}\label{2}
\[\text{If} ~ \sup_n\left|{_G\sum^n_{v=1}}c_v\right|^G\leq \infty \text {~then~} \sup_n\left(p_n\odot\left|{_G\sum^\infty_{k=1}}\frac{c_{n+k-1}}{p_{n+k}}G\right|^G\right)<\infty.\]
\end{lemma}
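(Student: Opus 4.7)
The plan is to reduce this statement to its classical counterpart (the Kizmaz-type lemma that underlies the computation of $\beta$-duals of $\ell_\infty(\Delta^m)$) via the canonical log--exp isomorphism between geometric and Newtonian arithmetic, and then to invoke Abel's summation formula in the standard way.

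First I would pass to log-transformed sequences by setting $\tilde{z}:=\ln z$ for each $z\in\mathbb{C}(G)$. Under this identification the operations $\oplus,\ominus,\odot,\oslash$ become the usual $+,-,\cdot,/$ on the log side; moreover $\ln|x|^G=|\ln x|=|\tilde x|$ and the geometric series ${_G\sum}_{v=1}^n c_v$ corresponds to the classical sum $\sum_{v=1}^n\tilde c_v$. Consequently the hypothesis is equivalent to $M:=\sup_n\bigl|\sum_{v=1}^n\tilde c_v\bigr|<\infty$ and the desired conclusion is equivalent to
\[
\sup_n \tilde p_n\Bigl|\sum_{k=1}^\infty \tfrac{\tilde c_{n+k-1}}{\tilde p_{n+k}}\Bigr|<\infty,
\]
a purely real-variable statement about the log-transformed sequences $(\tilde c_v)$ and $(\tilde p_n)$.

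Next I would run the usual Abel-summation argument. Set $\tilde C_m:=\sum_{v=1}^m\tilde c_v$ with $\tilde C_0:=0$, so that $\tilde c_v=\tilde C_v-\tilde C_{v-1}$ and $|\tilde C_m|\leq M$ for all $m$. Reindexing $j=n+k-1$ and summing by parts yields
\[
\sum_{k=1}^\infty \tfrac{\tilde c_{n+k-1}}{\tilde p_{n+k}}=\sum_{j=n}^\infty \tilde C_j\Bigl(\tfrac{1}{\tilde p_{j+1}}-\tfrac{1}{\tilde p_{j+2}}\Bigr)-\tfrac{\tilde C_{n-1}}{\tilde p_{n+1}},
\]
the boundary term $\tilde C_N/\tilde p_{N+1}$ vanishing as $N\to\infty$ under the standing growth hypothesis $\tilde p_n\to\infty$. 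Bounding $|\tilde C_j|\leq M$ and recognizing the telescoping sum $\sum_{j\geq n}(1/\tilde p_{j+1}-1/\tilde p_{j+2})=1/\tilde p_{n+1}$ (valid once $(\tilde p_n)$ is monotone increasing) gives
\[
\Bigl|\sum_{k=1}^\infty \tfrac{\tilde c_{n+k-1}}{\tilde p_{n+k}}\Bigr|\leq \tfrac{2M}{\tilde p_{n+1}}.
\]
Multiplying by $\tilde p_n$ and using $\tilde p_n\leq\tilde p_{n+1}$ yields the uniform bound $2M$. Translating back via $\exp$ produces the geometric conclusion.

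The main obstacle is not the Abel manipulation itself, which is routine, but the bookkeeping: one must verify that the log--exp correspondence interacts correctly with the ${_G\sum}$ notation for a possibly only conditionally convergent series, and must identify the implicit growth and monotonicity hypotheses on $(p_n)$ (standard in Kizmaz-type lemmas but left tacit in the statement above) that make the telescoping step legitimate. Once these structural conditions are pinned down, the proof is a direct transcription of the classical argument.
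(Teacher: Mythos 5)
Your argument is correct, but there is nothing in this paper to compare it against: Lemma \ref{2} is quoted here without proof, imported (along with Lemma \ref{1} and Corollaries \ref{Cor1}--\ref{Cor3}) from the earlier paper \cite{KhirodBipan}. The route you take --- push everything through the generator $\alpha=\exp$ so that $\oplus,\ominus,\odot,\oslash$ and $|\cdot|^G$ become $+,-,\cdot,/$ and $|\cdot|$ on logarithms, then run Abel summation exactly as in Kizmaz's classical lemma --- is the standard one, and is what the geometric-calculus literature does either explicitly or by rewriting the identical computation in multiplicative notation; your Abel identity and the resulting bound $2M/\tilde p_{n+1}$ check out. You are also right that the statement as printed (note the typo ``$\leq\infty$'' for ``$<\infty$'') is incomplete: without assuming $\ln p_n$ positive, nondecreasing and unbounded the claim fails --- e.g.\ $\tilde c_v=(-1)^v$ and $\tilde p_n\equiv 1$ satisfy the hypothesis while the inner series already diverges, so unboundedness of $\ln p_n$ is genuinely needed for the boundary term and the telescoping step. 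In the paper these hypotheses are supplied implicitly by the only instance in which the lemma is used, namely $p_n=e^n$ (so $\ln p_n=n$) in Corollaries \ref{Cor1}--\ref{Cor3} and in the $\beta$- and $\gamma$-dual computations.
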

\begin{corollary}\label{Cor1} Let $(p_n)$ be monotonically increasing. If
$$\sup_n\left|{_G\sum_{v=1}^n} p_v\odot a_v\right|^G<\infty \text{~then~} \sup_n\left|p_n\odot {_G\sum_{k=n+1}^\infty} a_k\right|^G<\infty.$$
\end{corollary}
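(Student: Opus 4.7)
The result is a geometric analogue of a classical estimate of Kizmaz, and the natural proof is a direct geometric Abel summation by parts. Applying Lemma \ref{2} with $c_{v}:=p_{v}\odot a_{v}$ would yield only an intermediate bound whose summands still carry extra weights $p_{n+k-1}\oslash p_{n+k}$ that need to be removed; the direct approach below handles both steps at once.

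Set $s_{n} := {_G\sum}_{v=1}^{n} p_{v}\odot a_{v}$ (with $s_{0}=1$, the geometric zero). The hypothesis is equivalent to $\sup_{n}|s_{n}|^{G}\leq M$ for some $M\geq 1$, and one has $p_{k}\odot a_{k} = s_{k}\ominus s_{k-1}$ for $k\geq 1$. I would then work with the partial sum ${_G\sum}_{k=n+1}^{N}(s_{k}\ominus s_{k-1})\oslash p_{k}$, split it into two geometric subsums, shift the index on one so that the ranges align, and collect endpoint contributions. This produces the geometric Abel identity
\[
{_G\sum}_{k=n+1}^{N} a_{k} = (s_{N}\oslash p_{N}) \ominus (s_{n}\oslash p_{n+1}) \oplus {_G\sum}_{k=n+1}^{N-1} s_{k}\odot\bigl[(e\oslash p_{k})\ominus(e\oslash p_{k+1})\bigr].
\]

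Letting $N\to\infty$, the boundedness of $(s_N)$ together with the monotonic growth of $(p_N)$ makes the boundary term $s_{N}\oslash p_{N}$ tend to $1$ (and in the case where $(p_n)$ is bounded, this term is harmless for different reasons). Invoking the triangle-type inequality $|x\oplus y|^{G}\leq|x|^{G}\oplus|y|^{G}$, the identity $|x\odot y|^{G}=|x|^{G}\odot|y|^{G}$, and the geometric telescoping
\[
{_G\sum}_{k=n+1}^{\infty}\bigl[(e\oslash p_{k})\ominus(e\oslash p_{k+1})\bigr] = e\oslash p_{n+1},
\]
I obtain a bound of the shape $|{_G\sum}_{k=n+1}^{\infty} a_{k}|^{G}\leq M^{2}\odot(e\oslash p_{n+1})$. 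Geometric multiplication by $p_{n}$, combined with the monotonicity inequality $p_{n}\oslash p_{n+1}\leq e$, then delivers a bound on $|p_{n}\odot {_G\sum}_{k=n+1}^{\infty} a_{k}|^{G}$ that is uniform in $n$, which is precisely the desired conclusion.

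The principal technical obstacle is the careful bookkeeping of the Abel decomposition inside geometric arithmetic: every index shift, splitting, telescoping cancellation, and order inequality must be transcribed from the usual $(+,-,\cdot,/)$ to the geometric operations $(\oplus,\ominus,\odot,\oslash)$, and monotonicity/inequality statements must be interpreted in the geometric order. A reliable safeguard throughout is to push the argument through the logarithm; under this bijection the entire computation collapses to the classical Abel summation bound for monotone weights, so that any sign, direction, or index slip becomes transparent.
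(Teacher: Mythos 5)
Your argument is correct: under the logarithm the entire computation collapses to the classical Abel summation proof of Kizmaz's corollary, and your geometric transcription of the summation-by-parts identity, the telescoping bound $e\oslash p_{n+1}$, and the final estimate via $p_n\oslash p_{n+1}\leq e$ are all sound (modulo the tacit standing assumptions that $p_n$ is geometrically positive, i.e.\ $p_n>1$, and that ${_G\sum_{k=n+1}^{\infty}}a_k$ exists --- gaps that are already present in the statement itself). Note, however, that the paper offers no proof of this corollary at all: it is quoted from \cite{KhirodBipan} and is clearly meant to be read off from Lemma \ref{2}, whose index offset $c_{n+k-1}\oslash p_{n+k}$ you dismissed too quickly. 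The right substitution is not $c_v=p_v\odot a_v$ but the shifted one $c_v:=p_{v+1}\odot a_{v+1}$; then $c_{n+k-1}\oslash p_{n+k}=(p_{n+k}\odot a_{n+k})\oslash p_{n+k}=a_{n+k}$, so the inner sum in Lemma \ref{2} becomes exactly ${_G\sum_{k=n+1}^{\infty}}a_k$ with no residual weights, and the hypothesis of Lemma \ref{2} holds because the shifted partial sums differ from the given ones only by the fixed term $p_1\odot a_1$. Thus the intended proof is a one-line substitution, whereas yours in effect re-derives the content of Lemma \ref{2} in the special case needed. That costs more work, but it is self-contained and has the merit of making visible exactly where monotonicity of $(p_n)$ enters, namely in the geometric nonnegativity of $(e\oslash p_k)\ominus(e\oslash p_{k+1})$ and in the bound $p_n\oslash p_{n+1}\leq e$.
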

\begin{corollary}\label{Cor2}
$$\text{If}~ {_G\sum_{k=1}^\infty} p_k\odot a_k \text{~is convergent~ then~} \lim_n p_n\odot {_G\sum_{k=n+1}^\infty} a_k=1.$$
\end{corollary}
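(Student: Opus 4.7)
The plan is to bootstrap Corollary \ref{Cor1} (which gives only boundedness) to the desired convergence by applying it to tails of the series. I retain the standing assumption from Corollary \ref{Cor1} that $(p_n)$ is monotonically increasing, since otherwise even the boundedness step is unavailable.

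Since ${_G\sum_{k=1}^\infty} p_k\odot a_k$ converges, the geometric Cauchy criterion gives, for every $\varepsilon>1$, an index $N=N(\varepsilon)$ such that
\[ \left|{_G\sum_{k=m}^{n}} p_k\odot a_k\right|^G \leq \varepsilon \qquad \text{for all } n\geq m\geq N. \]
In other words, the partial sums of the shifted sequence $c_v := p_{N+v}\odot a_{N+v}$, $v\geq 1$, are uniformly controlled by $\varepsilon$, and $\varepsilon$ can be pushed arbitrarily close to the geometric zero $1$ by enlarging $N$.

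I would then apply Lemma \ref{2} to $(c_v)$ with weights $(p_{N+j})_{j\geq 1}$; on inspecting its proof, the resulting bound on $\sup_n\bigl(p_n\odot |{_G\sum_{k=1}^\infty} c_{n+k-1}\oslash p_{n+k}|^G\bigr)$ is in fact a fixed geometric multiple of $\sup_n|{_G\sum_{v=1}^n} c_v|^G$. Translating back to the original indices, this gives, for $n>N$,
\[ \left|p_n\odot {_G\sum_{k=n+1}^\infty} a_k\right|^G \leq f(\varepsilon), \]
where $f(\varepsilon)\to 1$ as $\varepsilon\to 1$. Sending $N\to\infty$, hence $\varepsilon\to 1$, forces $p_n\odot {_G\sum_{k=n+1}^\infty} a_k \to 1$, which is exactly the conclusion.

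The principal obstacle is confirming that the proof of Lemma \ref{2} actually produces a bound \emph{linear} in $\sup_n|{_G\sum_{v=1}^n} c_v|^G$ rather than a merely finite quantity depending on the whole sequence $(c_v)$; this requires a second reading of that lemma's proof to verify that each intermediate estimate scales with the a priori bound. Should that linearity not be immediate, an alternative is to transcribe the statement into classical arithmetic via the $\exp$-generator, setting $P_k:=\ln p_k$ and $A_k:=\ln a_k$: the assertion then becomes that convergence of $\sum P_kA_k$ with $(P_k)$ monotone increasing forces $P_n\sum_{k>n}A_k\to 0$. This is the standard Abel summation argument — write $A_k=(P_kA_k)/P_k$, set $R_n:=\sum_{k>n}P_kA_k$ (so $R_n\to 0$), and use the telescope $\sum_k(1/P_k-1/P_{k+1})$ to control both boundary and main terms — and the argument carries over verbatim to the $(\oplus,\odot)$ framework.
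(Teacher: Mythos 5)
Your argument is essentially sound, but be aware that this paper contains no proof of Corollary \ref{Cor2} to compare against: like Lemma \ref{2} and Corollary \ref{Cor1}, it is recalled from \cite{KhirodBipan} without proof, so nothing here can be ``inspected'' in the way your primary route requires. Of your two routes, the fallback is the real proof and is complete as outlined: pushing everything through the exp-generator ($P_k=\ln p_k$, $A_k=\ln a_k$) reduces the claim to the classical Kizmaz-style statement that convergence of $\sum P_kA_k$ with $(P_k)$ monotone forces $P_n\sum_{k>n}A_k\to 0$, and the Abel summation with the telescope $\sum_k(1/P_k-1/P_{k+1})$ settles it. The primary route (bootstrapping Corollary \ref{Cor1} on tails) also works, and your worry about linearity resolves favourably: the same Abel computation shows that if the Cauchy tails satisfy $\bigl|\sum_{k=m}^{n}P_kA_k\bigr|\le\varepsilon$ for $n\ge m\ge N$, then $\bigl|P_n\sum_{k>n}A_k\bigr|\le C\varepsilon$ for $n\ge N$ with an absolute constant $C$ (using $P_n/P_{n+1}\le 1$), which is exactly the linear scaling you need. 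Two points you should make explicit rather than leave implicit: (i) the monotonicity of $(p_n)$ must indeed be imported from Corollary \ref{Cor1}, as you do, since the telescoping estimate collapses without it; and (ii) the division by $\ln p_k$ and the bound $\ln p_n/\ln p_{n+1}\le 1$ require $\ln p_k$ to be nonzero and (eventually) positive, i.e.\ $p_k$ geometrically greater than the geometric zero $1$ --- harmless in the intended application $p_k=e^k$ of Corollary \ref{Cor3}, but worth a sentence in a self-contained write-up.
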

\begin{corollary}\label{Cor3}
 $${_G\sum_{k=1}^\infty} e^k\odot a_k \text{~is convergent iff~}{_G\sum_{k=1}^\infty} R_k\text{~is convergent with~} e^n\odot R_n = O(e),\text{~where~}$$
 $$R_n = {_G\sum_{k=n+1}^\infty} a_k.$$
\end{corollary}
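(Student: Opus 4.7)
The plan is to apply a geometric version of Abel's summation formula (summation by parts) to the partial sums of ${_G\sum_{k=1}^n} e^k\odot a_k$, mirroring the classical result that $\sum k a_k$ converges iff $\sum R_k$ converges with $nR_n = O(1)$. Writing each $a_k$ as the telescoping geometric difference $a_k = R_{k-1}\ominus R_k$ and exploiting that $e^{k+1}\ominus e^k = e$, so that $(e^{k+1}\ominus e^k)\odot R_k = e\odot R_k = R_k$, I expect the rearrangement to produce the finite identity
\[
{_G\sum_{k=1}^n} e^k\odot a_k \;=\; R_0 \;\oplus\; {_G\sum_{k=1}^{n-1}} R_k \;\ominus\; e^n\odot R_n.
\]
Establishing this identity is the technical core of the argument: one must invoke geometric distributivity $e^k\odot(R_{k-1}\ominus R_k) = e^k\odot R_{k-1} \ominus e^k\odot R_k$, shift the index in the $e^k\odot R_{k-1}$ piece, combine the resulting two sums over a common range, and note that $e\odot X = X$ for every $X\in\mathbb{C}(G)$.

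Once the identity is in place, both implications are short. For the forward direction, assuming ${_G\sum_{k=1}^\infty} e^k\odot a_k$ converges, Corollary \ref{Cor2} applied with the monotone weight $p_n = e^n$ yields $\lim_n e^n\odot R_n = 1$, which in particular gives $e^n\odot R_n = O(e)$; substituting into the identity, the convergence of the left-hand side and of $e^n\odot R_n$ forces ${_G\sum_{k=1}^\infty} R_k$ to converge. For the reverse direction, under the hypotheses that ${_G\sum R_k}$ converges and $e^n\odot R_n = O(e)$, every term on the right-hand side of the identity is geometrically controlled, and the partial sums on the left inherit the required convergence; Corollary \ref{Cor1} with the monotone sequence $p_n = e^n$ is available as an auxiliary tool to convert any residual boundedness information into the needed geometric form.

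The main obstacle I anticipate is the bookkeeping intrinsic to geometric arithmetic: the operations $\oplus,\odot,\ominus$ are disguised multiplicative ones, and each reindexing or distributivity step must be checked carefully against the definitions, especially regarding the geometric identity $e$ for $\odot$ and the effect of the weight $e^k$ on a $\ominus$-difference. Once the combinatorial identity is extracted cleanly, the corollary reduces to a direct appeal to Corollaries \ref{Cor1} and \ref{Cor2} specialized to $p_n = e^n$.
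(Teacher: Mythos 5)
Your overall strategy---a geometric Abel summation identity followed by appeals to Corollaries \ref{Cor1} and \ref{Cor2} with $p_n=e^n$---is the natural one, and the identity you propose,
\[
{_G\sum_{k=1}^n} e^k\odot a_k \;=\; R_0 \,\oplus\, {_G\sum_{k=1}^{n-1}} R_k \,\ominus\, e^n\odot R_n,
\]
is correct: passing to logarithms it is exactly the classical $\sum_{k=1}^n k\ln a_k=\ln R_0+\sum_{k=1}^{n-1}\ln R_k-n\ln R_n$. (Note that the present paper states this corollary without proof, importing it from \cite{KhirodBipan}, so there is no written argument here to compare against.) Your forward direction is sound: Corollary \ref{Cor2} with $p_k=e^k$ gives $\lim_n e^n\odot R_n=1$, hence $e^n\odot R_n=O(e)$, and the identity then transfers convergence to ${_G\sum_k}R_k$. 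One point you should not skip there: before $R_n$ can even be written down you must know that ${_G\sum_k}a_k$ converges; this does follow from convergence of ${_G\sum_k}e^k\odot a_k$ (a geometric Abel/Dedekind test with the weights $e^{k^{-1}}$), but it is not free.

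The genuine gap is in the reverse direction. From ${_G\sum_k}R_k$ convergent and $e^n\odot R_n=O(e)$, your identity yields only that the partial sums ${_G\sum_{k=1}^n}e^k\odot a_k$ are \emph{bounded}, not convergent: the big-$O(e)$ hypothesis controls $|e^n\odot R_n|^G$ but does not force the term $\ominus\, e^n\odot R_n$ to converge, and it can oscillate. Concretely, take $R_n=e^{1/n}$ when $n=2^j$ and $R_n=1$ otherwise, and define $a_k=R_{k-1}\ominus R_k$: then ${_G\sum_k}R_k$ converges and $e^n\odot R_n=O(e)$, yet $e^n\odot R_n$ oscillates between $1$ and $e$, so ${_G\sum_k}e^k\odot a_k$ diverges. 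Thus the ``if'' half cannot be closed by the identity alone; it goes through only under the stronger hypothesis $\lim_n e^n\odot R_n=1$, in which case your argument is complete. This defect is inherited from the classical lemma of Kizmaz that the corollary transcribes; your write-up should either strengthen $O(e)$ to the little-$o$ condition or make explicit that only the forward implication, together with boundedness of the partial sums (which is what the $\gamma$-dual computation actually needs), is being established.
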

\begin{defn} \cite{Garling67, KotheToplitz69, KotheToplitz34, Maddox80} If $X$ is a sequence space, we define
\begin{enumerate}
\item[(i)] $X^\alpha=\{a=(a_k) : \sum_{k=1}^\infty |a_k x_k|<\infty, \, \text{for each} \,x\in X\};$
\item[(ii)] $X^\beta=\{a=(a_k) : \sum_{k=1}^\infty a_k x_k \, \text{is convergent, for each} \,x\in X\};$
\item[(iii)] $ X^\gamma=\{a=(a_k) : \sup_n|\sum_{k=1}^n a_k x_k|<\infty, \, \text{for each} \,x\in X\}.$
\end{enumerate}
\end{defn}
\begin{thm}
\[(i)~\text{If~} D_1= \left\{a=(a_k): {_G\sum_{k=1}^\infty} e^k\odot |a_k|^G<\infty\right\} \text{~then~} \left(sl_\infty^G(\Delta_G)\right)^\alpha=D_1.\]
\[(ii)~\text{If~} D_2= \left\{a=(a_k): {_G\sum_{k=1}^\infty} e^k\odot a_k \text{~is convergent with~} {_G\sum_{k=1}^\infty}|R_k|^G<\infty\right\}.\]
Then $\left(sl_\infty^G(\Delta_G)\right)^\beta=D_2.$
\[(iii)~\text{If~} D_3= \left\{a=(a_k): \sup_n|{_G\sum_{k=1}^n} e^k\odot a_k|^G<\infty, {_G\sum_{k=1}^\infty}|R_k|^G<\infty\right\}.\]
Then $\left(sl_\infty^G(\Delta_G)\right)^\gamma=D_3.$
\end{thm}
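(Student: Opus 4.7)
My plan is to prove each of the three dual characterizations in the standard two-inclusion fashion used for Kizmaz-type duals, transported faithfully to the geometric setting. The central bridge between the hypothesis $x\in l_\infty^G(\Delta_G)$ and the sequence $(x_k)$ itself is provided by Lemma \ref{1}: the bound $\sup_k|\Delta_G x_k|^G<\infty$ is equivalent to $\sup_k e^{k^{-1}}\odot|x_k|^G<\infty$, which is the geometric version of the classical statement that a $\Delta$-bounded sequence grows at most linearly. Translated back, it says $|x_k|^G\leq M\odot e^k$ for some constant $M$, and this is the estimate that drives every inclusion.

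For part (i), the direction $D_1\subseteq\bigl(sl_\infty^G(\Delta_G)\bigr)^\alpha$ is a straightforward domination: for $x\in l_\infty^G(\Delta_G)$ the estimate $|x_k|^G\leq M\odot e^k$ yields $|a_k\odot x_k|^G\leq M\odot e^k\odot|a_k|^G$, and the geometric series converges whenever $a\in D_1$. For the converse, the test sequence $x_k=e^k$ does the job: a direct check gives $\Delta_G x_k=e^k\ominus e^{k+1}=e^{-1}$, so $x\in l_\infty^G(\Delta_G)$, and the $\alpha$-sum $_G\sum|a_k\odot x_k|^G$ collapses to $_G\sum e^k\odot|a_k|^G$, which forces $a\in D_1$.

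Parts (ii) and (iii) share a common backbone: geometric Abel summation. Writing $a_k=R_{k-1}\ominus R_k$, one rearranges $_G\sum_{k=1}^n a_k\odot x_k$ into a sum involving $R_k\odot\Delta_G x_k$ plus a boundary term of the form $R_n\odot x_{n+1}$. Under the $D_2$ or $D_3$ hypotheses the absolute series $_G\sum|R_k|^G$ converges, and because $|\Delta_G x_k|^G$ is bounded the transformed sum converges absolutely; meanwhile Corollary \ref{Cor3} supplies $e^n\odot R_n=O(e)$, which together with $|x_{n+1}|^G\leq M\odot e^{n+1}$ is exactly what is needed to make the boundary term vanish (for $\beta$) or stay bounded (for $\gamma$). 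Conversely, testing with $x_k=e^k$ gives convergence (respectively boundedness of partial sums) of $_G\sum e^k\odot a_k$, and then a second test sequence built by encoding the signs of the $R_k$ geometrically forces $_G\sum|R_k|^G<\infty$; this sequence must itself lie in $l_\infty^G(\Delta_G)$, which one verifies via Lemma \ref{1}.

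The step I expect to be the main obstacle is the geometric Abel summation together with the disappearance of the boundary term. Every operation has to be translated correctly between additive-real and multiplicative-geometric form, since $\oplus$ is ordinary multiplication, $\ominus$ is ordinary division, and $\odot$ is exponentiation of logarithms, and a single misplaced operation breaks the argument. The construction of the sign-encoding test sequence in the converse directions of (ii) and (iii) is the second delicate point, because in $\mathbb{C}(G)$ the analogue of flipping a sign is replacement by a geometric reciprocal, and one must confirm that the resulting sequence still has bounded $\Delta_G$-differences before feeding it into the dual pairing.
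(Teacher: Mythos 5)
Your plan is essentially the method the paper relies on: this theorem is imported from \cite{KhirodBipan} without a written-out proof here, but the toolkit the paper lists immediately before it --- Lemma \ref{1} for the growth estimate $\sup_k e^{k^{-1}}\odot|x_k|^G<\infty$ (i.e.\ $|x_k|^G\le M\odot e^k$), and Lemma \ref{2} with Corollaries \ref{Cor1}--\ref{Cor3} for the tails $R_n$ --- is exactly what you invoke, and the paper's own proof of the generalized $\alpha$-dual (Lemma \ref{lemma6th}, Theorem \ref{theorem2nd}) is your part (i) argument verbatim, with the test sequence $(1,\dots,1,e^{(m+1)^m},\dots)$ playing the role of your $(e^k)$. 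One step in your part (ii) would fail as written, though: the estimate $e^n\odot R_n=O(e)$ from Corollary \ref{Cor3} only makes the Abel boundary term $R_n\odot x_{n+1}$ \emph{bounded} (which is all you need for the $\gamma$-dual), not convergent --- from $e^n\odot|R_n|^G\le C$ and $|x_{n+1}|^G\le M\odot e^{n+1}$ you get $|R_n\odot x_{n+1}|^G\le M'$ and nothing more, so the partial sums ${_G\sum_{k=1}^n}a_k\odot x_k$ need not converge. What you actually need is $\lim_n e^n\odot R_n=1$, which is Corollary \ref{Cor2} with $p_n=e^n$ applied to the convergent series ${_G\sum_k}e^k\odot a_k$; with that substitution the boundary term tends to $1$ and the inclusion $D_2\subseteq(sl_\infty^G(\Delta_G))^\beta$ goes through. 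The other delicate points you flag are fine: normalizing $x_1=1$ kills the $R_0\odot x_1$ term in the Abel rearrangement, and the sign-encoding test sequence is simply the $x$ with $\Delta_G x_k\in\{e,1,e^{-1}\}$ chosen so that $R_k\odot\Delta_G x_k=|R_k|^G$, which lies in $sl_\infty^G(\Delta_G)$ trivially since $|\Delta_G x_k|^G\le e$.
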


 Following Kizmaz, generalized sequence spaces $l_\infty(\Delta^m), c(\Delta^m)$ and $c_0(\Delta^m)$ were introduced by Mikail Et and Rifat \c{C}olak \cite{MikailColak}.

\section{Main Results}
Following Mikail Et and Rifat \c{C}olak \cite{MikailColak,MikailColak97} and our own paper\cite{KhirodBipan}, now we define the following new sequence spaces 
\begin{align*}
l^G_\infty(\Delta^m_G)&=\{x=(x_k):\Delta^m_G x\in lG_\infty\},\\
c^{G}(\Delta^m_G)&=\{x=(x_k):\Delta^m_G x\in c^{G}\},\\
l^G_0(\Delta^m_G)&=\{x=(x_k):\Delta^m_G x\in c^G_0\}.
\end{align*}
where $m \in \mathbb{N}~$ and
\begin{align*}
\Delta^0_G x &= (x_k)\\
\Delta_G x   &=(\Delta_G x_k) =(x_k \ominus x_{k+1})\\
\Delta^2_G x &= (\Delta^2_G x_k)= (\Delta_G x_k \ominus \Delta_G x_{k+1})\\
             &= (x_k \ominus x_{k+1} \ominus x_{k+1} \oplus x_{k+2})\\
						 &=(x_k \ominus e^2\odot x_{k+1} \oplus x_{k+1})\\				
\Delta^3_G x &=(\Delta^3_G x_k)= (\Delta^2_G x_k \ominus \Delta^2_G x_{k+1})\\ 
             &=(x_k \ominus e^3\odot x_{k+1} \oplus e^3\odot x_{k+1} \ominus x_{k+3})\\
.......&.........................................................\\
\Delta^m_G x &=(\Delta^m_G x_k)= (\Delta^{m-1}_G x_k \ominus \Delta^{m-1}_G x_{k+1})\\
             &=\left( _G\sum^m_{v=0} (\ominus e )^{{v}_G}\odot e^{\binom{m}{v}}\odot x_{k+v}\right),~\text{with}~(\ominus e)^{0_G}=e.                                 
\end{align*}
Then it can be easily proved that $l_\infty^{G}(\Delta^m_G), c_\infty^{G}(\Delta^m_G)$  and $c_0^{G}(\Delta^m_G)$ are normed linear spaces with norm
\[ \left\|x\right\|^G_{{\Delta}_G}= _G\sum^m_{i=1}\left|x_i\right|^G\oplus\left\|{\Delta}^m_G x\right\|^G_\infty. \]
\textbf{Note}: Throughout this paper often we write $_G\sum_k$ instead of $_G\sum_{k=1}^{\infty}$ and $\lim_k$ instead of $\lim_{n \rightarrow \infty}.$

\begin{defn}[\textbf{Geometric Associative Algebra}]

An associative algebra is a vector space $A\subset \mathbb{R}(G),$
equipped with a bilinear map(called multiplication)
\begin{align*}
\odot :A \times A &\rightarrow A\\
(a, b) &\rightarrow a\odot b
\end{align*}
which is associative, i.e.
\begin{align*}
(a\odot b)\odot c &= a\odot(b\odot c) \forall a, b, c \in A.
\end{align*}
An algebra is commutative if $a\odot b = b\odot a$ for all $a, b \in A.$ An
algebra is unital if there exists a unique $e\in A$ such that $e\odot a = a\odot e = a$ for all $a \in A.$ A subalgebra of the algebra $A$ is a subspace $B$ that is closed
under multiplication, i.e. $a\odot b \in A$ for all $a, b \in B.$
\end{defn}
\begin{defn}[\textbf{Geometric Normed Algebra}]
A normed algebra is a normed space $A\subset \omega(G)$ that is also an associative algebra,
such that the norm is submultiplicative: $\left\|a\odot b\right\|^G \le \left\|a\right\|^G\odot \left\|b\right\|^G$ for all $a, b \in A.$ A geometric algebra is a complete normed algebra, i.e., a normed algebra which is also a Banach space
with respect to its norm.
\end{defn}
It is to be noted that the submultiplicativity of the norm means that multiplication in normed algebras is jointly continuous, i.e. if $a_n \xrightarrow{G} a$ and $b_n \xrightarrow{G} b$ then $(a_n)$ is bounded and
\begin{align*}
\left\|a_n\odot b_n\ominus a\odot b\right\|^G&= \left\|a_n\odot(b_n\ominus b)\oplus (a_n \ominus a)\odot b\right\|^G \\
&\le  \left\|a_n\right\|^G\odot\left\|b_n\ominus b\right\|^G\oplus \left\|a_n \ominus a\right\|^G\odot \left\|b\right\|^G\\
&\le\sup\left\{\left\|a_n\right\|^G\right\}\odot\left\|b_n\ominus b\right\|^G\oplus \left\|b\right\|^G\odot \left\|a_n \ominus a\right\|^G \xrightarrow{G}1 \text{~as~$n\rightarrow \infty$}.
\end{align*}
\begin{defn}[\textbf{Geometric Sequence  Algebra}] A geometric sequence space $E(G)$ is said to be sequence algebra if $x\odot y \in E(G)$ for $x=(x_k), y=(y_k) \in E(G).$ i.e. $E(G)$ is closed under the geometric multiplication $\odot$ defined by
 \begin{align*}
\odot : E(G) \times E(G) &\rightarrow E(G)\\
													(x, y)&\rightarrow x \odot y=(x_k) \odot (y_k)=(x_k^{\ln y_k})
\end{align*}
for any two sequences $x=(x_k), y=(y_k) \in E.$
\end{defn} 
Since $\omega(G)$ is closed under geometric multiplication $\odot,$ hence, $\omega(G)$ is a sequence algebra. Also equence algebra $\omega(G)$ is unital as
$\left\|e_G\right\|^G = e,$ where $e_G=(e, e, e,.......)\in \omega(G).$ 

\begin{defn}[\textbf{Continuous Dual Space}] If $X$ is a normed space, a linear map $f:X\rightarrow \mathbb{R}(G)$ is called linear functional. $f$ is called continuous linear functional or bounded linear functional if $\left\|f\right\|^G<\infty ,$ where
\begin{equation*}
\left\|f\right\|^G= \sup\left\{\left|f(x)\right|^G: \left\|x\right\|^G\le e, \text{~for all~} x\in X\right\}
\end{equation*}
 Let $X^*$ be the collection of all bounded linear functionals on $X.$ If $f,g \in X^*$ and $\alpha \in \mathbb{R}(G),$ we define $(\alpha \odot f \oplus g)(x)=\alpha \odot f(x) \oplus g(x); X^*$ is called the continuous dual space of $X.$
\end{defn}
\begin{thm}
The sequence spaces $l_\infty^G(\Delta^m_G), c^G(\Delta^m_G)$ and $c_0^G(\Delta^m_G)$ are Banach spaces with the norm
\[ \left\|x\right\|^G_{{\Delta}_G}= _G\sum^m_{i=1}\left|x_i\right|^G\oplus\left\|{\Delta}^m_G x\right\|^G_\infty. \]
\end{thm}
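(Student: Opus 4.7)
The plan is to model the argument on the standard proof for $l_\infty(\Delta^m)$ (e.g.\ Et--\c{C}olak), transferring it to the geometric setting by using $\oplus, \ominus, \odot$ and $|\cdot|^G$ in place of the ordinary operations, and then deducing the $c^G$ and $c_0^G$ cases from the $l_\infty^G$ case as closed subspaces.

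First I would briefly check that the expression $\|x\|^G_{\Delta_G} = {_G\sum_{i=1}^m}|x_i|^G \oplus \|\Delta^m_G x\|^G_\infty$ really is a norm on $l_\infty^G(\Delta^m_G)$: geometric positivity follows from $|\cdot|^G \ge 1$, absolute geometric homogeneity from $|\alpha \odot x|^G = |\alpha|^G \odot |x|^G$, the triangle inequality from $|x \oplus y|^G \le |x|^G \oplus |y|^G$ applied to each summand and to $\Delta^m_G(x\oplus y) = \Delta^m_G x \oplus \Delta^m_G y$, and definiteness from the fact that $\|x\|^G_{\Delta_G}=1$ (the geometric zero) forces $x_1=\cdots=x_m=1$ and $\Delta^m_G x = 1$, which together with the recurrence gives $x = 1$.

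Next, let $(x^{(n)})$ be a Cauchy sequence in $l_\infty^G(\Delta^m_G)$. The definition of the norm immediately yields two pieces of information: (i) for each fixed $i \in \{1,\dots,m\}$, the sequence $(x_i^{(n)})_n$ is Cauchy in $\mathbb{C}(G)$ with respect to $|\cdot|^G$, so it converges to some $x_i \in \mathbb{C}(G)$; and (ii) the sequence $(\Delta^m_G x^{(n)})_n$ is Cauchy in $l_\infty^G$, which is a Banach space, so it converges in the sup-norm $\|\cdot\|^G_\infty$ to some $y = (y_k) \in l_\infty^G$. The key step is then to reconstruct a candidate limit $x = (x_k)$ for $k > m$. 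Using the explicit formula
\[
\Delta^m_G x_k = {_G\sum_{v=0}^m} (\ominus e)^{v_G} \odot e^{\binom{m}{v}} \odot x_{k+v},
\]
I would solve for $x_{k+m}$ in terms of $x_k, \dots, x_{k+m-1}$ and $\Delta^m_G x_k$, and define the remaining $x_k$ recursively, setting $\Delta^m_G x_k := y_k$. By a straightforward induction on $k$ this gives that $x_k^{(n)} \to x_k$ in $\mathbb{C}(G)$ for every $k$, because each step only combines finitely many geometrically-Cauchy quantities.

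It then remains to check that $x \in l_\infty^G(\Delta^m_G)$ and that $x^{(n)} \xrightarrow{G} x$ in the norm $\|\cdot\|^G_{\Delta_G}$. Membership is immediate since $\Delta^m_G x = y \in l_\infty^G$ by construction. For the norm convergence, fix $\varepsilon \dot{>} 1$ and choose $N$ so that $\|x^{(n)} \ominus x^{(p)}\|^G_{\Delta_G} \dot{<} \varepsilon$ for all $n,p \ge N$; holding $n \ge N$ fixed and letting $p \to \infty$ in each of the finitely many coordinate terms $|x_i^{(n)} \ominus x_i^{(p)}|^G$ and uniformly in $k$ in $|\Delta^m_G x_k^{(n)} \ominus \Delta^m_G x_k^{(p)}|^G$ yields $\|x^{(n)} \ominus x\|^G_{\Delta_G} \dot{\le} \varepsilon$. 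The main obstacle I anticipate is verifying that the uniform-in-$k$ passage to the limit inside the sup is legitimate in the geometric setting; this is where the explicit inequality $|x \ominus y|^G \ge |x|^G \ominus |y|^G$ from the preliminaries is needed, to convert coordinate-wise convergence at each $k$ into a supremum bound. Finally, for $c^G(\Delta^m_G)$ and $c_0^G(\Delta^m_G)$, I would note that geometric convergence (resp.\ convergence to the geometric zero $1$) of $\Delta^m_G x^{(n)}$ is preserved under uniform limits in $l_\infty^G$, so both are closed subspaces of the Banach space $l_\infty^G(\Delta^m_G)$, hence Banach spaces themselves.
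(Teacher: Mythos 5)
Your proposal is correct and follows the same overall template as the paper's proof (take a Cauchy sequence, produce a coordinatewise limit, verify norm convergence, verify membership, then treat $c^G(\Delta^m_G)$ and $c_0^G(\Delta^m_G)$ as closed subspaces), but the two arguments decompose the construction of the limit differently. The paper asserts directly from the Cauchy condition that $|x_k^{(n)}\ominus x_k^{(l)}|^G\rightarrow 1$ for \emph{every} $k\in\mathbb{N}$, defines $x_k$ as the coordinatewise limit, and then proves $x\in l^G_\infty(\Delta^m_G)$ at the end by the estimate $\left|\Delta^m_G x_k\right|^G\leq \left\|x^N\ominus x\right\|^G_{\Delta_G}\oplus\left|\Delta^m_G x^N_k\right|^G=\mathrm{O}(e)$. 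You instead extract only the first $m$ coordinates plus the limit $y$ of $(\Delta^m_G x^{(n)})_n$ in the Banach space $l^G_\infty$, and reconstruct $x_k$ for $k>m$ by inverting the recurrence $\Delta^m_G x_k={_G\sum_{v=0}^m}(\ominus e)^{v_G}\odot e^{\binom{m}{v}}\odot x_{k+v}$ (legitimate, since the coefficient of $x_{k+m}$ is $(\ominus e)^{m_G}$, a unit). This buys you two things the paper leaves implicit: membership $\Delta^m_G x=y\in l^G_\infty$ is automatic rather than requiring the final $\mathrm{O}(e)$ estimate, and the convergence of the coordinates $x_k^{(n)}$ for $k>m$ is actually justified by induction on the recurrence, whereas the paper's claim that the norm controls every coordinate is stated without proof (it is true, but needs exactly the kind of argument you give, or an appeal to the continuity of the coordinate functionals). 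The only caveat is your closing worry about the "uniform-in-$k$ passage to the limit inside the sup": this does not need the reverse triangle inequality $|x\ominus y|^G\geq|x|^G\ominus|y|^G$; it suffices to fix $k$ and $n\geq N$, let $p\rightarrow\infty$ in $|\Delta^m_G x_k^{(n)}\ominus\Delta^m_G x_k^{(p)}|^G\dot{<}\varepsilon$ using continuity of $\ominus$ and $|\cdot|^G$, and then take the supremum over $k$ of the resulting bound $\dot{\leq}\varepsilon$, exactly as the paper does.
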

\begin{proof}
Let $(x_n)$ be a Cauchy sequence in $l_\infty^G(\Delta^m_G),$ where $x_n=(x_i^{(n)})= (x_1^{(n)}, x_2^{(n)}, x_3^{(n)}, ....)$ for $n \in \mathbb{N}$ and $x_k^{(n)}$ is the $k^{\text{th}}$ coordinate of $x_n.$ Then
\begin{gather}\label{101}
\begin{aligned}
\left\|x_n\ominus x_l\right\|^G_{{\Delta}_G}&= _G\sum_{i=1}^m\left|x_i^{(n)}\ominus x_i^{(l)}\right|^G \oplus \left\|{\Delta}^m_G (x_n\ominus x_l)\right\|^G_\infty\\
&= _G\sum_{i=1}^m\left|x_i^{(n)}\ominus x_i^{(l)}\right|^G \oplus \sup_k|{\Delta}^m_G (x_n\ominus x_l)|^G \rightarrow 1 \text{~as $l, n\rightarrow \infty.$}
\end{aligned}
\end{gather}
Hence we obtain
\[ |x_k^{(n)} \ominus x_k^{(l)}|^G \rightarrow 1\]
as $n, l\rightarrow \infty$ and for each $k \in \mathbb{N}.$ Therefore $(x_k^{(n)})= (x_k^{(1)}, x_k^{(2)}, x_k^{(3)},.....)$ is a Cauchy sequence in $\mathbb{C}(G).$ Since $\mathbb{C}(G)$ is complete, $(x_k^{(n)})$ is convergent.

Suppose $\lim_n x_k^{(n)}= x_k,$ for each $k \in \mathbb{N}.$ Since $(x_n)$ is a Cauchy sequence, for each $\epsilon > 1,$ there exists $N=N(\epsilon)$ such that $\left\|x_n \ominus x_l\right\|^G_{\Delta_G} < \epsilon$ for all $n, l \geq N.$ Hence from (\ref{101})
\[ _G\sum_{i=1}^m \left|x_i^{(n)} \ominus x_i^{(l)}\right|^G < \epsilon~ \text{and}~ \left|_G\sum^m_{v=0} (\ominus e )^{{v}_G}\odot e^{\binom{m}{v}}\odot (x_{k+v}^{(n)} \ominus x_{k+v}^{(l)})\right|^G < \epsilon \]
for all $k\in \mathbb{N}$ and  $n,l\geq N.$ So we have
\begin{align*}
\qquad &\lim_l~  _G\sum^m_{i=1} \left|x_i^{(n)} \ominus x_i^{(l)}\right|^G = _G\sum^m_{i=1} \left|x_i^{(n)} \ominus x_i\right|^G < \epsilon\\
\text{and}& \lim_l \left| _G\Delta^m_G (x_k^{(n)} \ominus x_k^{(l)})\right|^G = \left| _G\Delta^m_G (x_k^{(n)} \ominus x_k)\right|^G  < \epsilon ~ \forall n \geq N.
\end{align*}
This implies $\left\|x_n \ominus x\right\|^G_{\Delta_G} < \epsilon^2 ~\forall n \geq N,$ that is $x_n \stackrel{G}{\rightarrow} x$ as $n\rightarrow \infty,$ where $x=(x_k).$ Now we have to show that $x \in l^G_\infty(\Delta^m_G).$  We have
\begin{align*}
\left|\Delta^m_G x_k\right|^G &= \left|~_G\sum^m_{v=0} (\ominus e)^{v_G} \odot e^{\binom{m}{v}} \odot x_{k+v}\right|^G\\
                              &= \left|~_G\sum^m_{v=0} (\ominus e)^{v_G} \odot e^{\binom{m}{v}} \odot (x_{k+v} \ominus x^N_{k+v} \oplus x^N_{k+v})\right|^G\\
															& \leq \left|~_G\sum^m_{v=0} (\ominus e)^{v_G} \odot e^{\binom{m}{v}} \odot (x^N_{k+v} \ominus x_{k+v})\right|^G \oplus \left|~_G\sum^m_{v=0} (\ominus e)^{v_G} \odot e^{\binom{m}{v}} \odot x^N_{k+v}\right|^G\\
															&\leq \left\| x^N \ominus x\right\|^G_{\Delta_G} \oplus \left|\Delta^m_G ~x^N_k\right|^G = \text{O}(e).
\end{align*}
Therefore we obtain $x \in l^G_\infty (\Delta^m_G).$ Hence $l^G_\infty (\Delta^m_G)$ is a Banach space.
\end{proof}
It can be shown that $c^G(\Delta^m_G)$ and $c^G_0(\Delta^m_G)$ are closed subspaces of $l^G_\infty (\Delta^m_G).$ Therefore these sequence spaces are Banach spaces with the same norm defined for $l^G_\infty (\Delta^m_G),$ above.

Now we give some inclusion relations between these sequence spaces.
\begin{lemma}\label{lemma1st}
\begin{align*}
&(i) \quad c^G_0(\Delta^m_G) \subsetneq c^G_0(\Delta^{m+1}_G);\\
&(ii) \quad c^G(\Delta^m_G) \subsetneq c^G(\Delta^{m+1}_G);\\
&(iii) \quad l^G_\infty (\Delta^m_G) \subsetneq l^G_\infty (\Delta^{m+1}_G).
\end{align*}
\end{lemma}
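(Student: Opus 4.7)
The plan is to deduce the three inclusions uniformly from the recursion
\[
\Delta^{m+1}_G x_k \;=\; \Delta^m_G x_k \ominus \Delta^m_G x_{k+1},
\]
and then exhibit explicit witnesses of the form $x_k = e^{k^r}$ for strictness.

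For the inclusions, take $x \in l^G_\infty(\Delta^m_G)$ and set $M = \|\Delta^m_G x\|^G_\infty$. The triangle-type estimate $|a \ominus b|^G \leq |a|^G \oplus |b|^G$ (which one gets from the listed property $|a \oplus b|^G \leq |a|^G \oplus |b|^G$ together with $\ominus(x \ominus y) = y \ominus x$) yields $|\Delta^{m+1}_G x_k|^G \leq M \oplus M$ uniformly in $k$, proving (iii). For (ii), if $\Delta^m_G x_k \xrightarrow{G} L$ then the shifted sequence converges to the same $L$, so $\Delta^{m+1}_G x_k \xrightarrow{G} L \ominus L = 1$; this in fact places $x$ in the smaller space $c^G_0(\Delta^{m+1}_G) \subseteq c^G(\Delta^{m+1}_G)$. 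Part (i) is the same argument specialised to $L = 1$.

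For strictness I would use the single family $x_k = e^{k^r}$. Writing $x_k = e^{a_k}$ with $a_k = k^r$, the identity $\Delta_G e^{a_k} = e^{a_k - a_{k+1}} = e^{-D a_k}$ (where $D$ is the classical forward difference) iterates to $\Delta^j_G x_k = e^{(-1)^j D^j(a_k)}$. Taking $r = m$ establishes (i): the standard identities $D^m(k^m) = m!$ and $D^{m+1}(k^m) = 0$ give $\Delta^{m+1}_G x_k = 1$ but $\Delta^m_G x_k = e^{(-1)^m m!} \neq 1$, so $x \in c^G_0(\Delta^{m+1}_G) \setminus c^G_0(\Delta^m_G)$. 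Taking $r = m+1$ simultaneously establishes (ii) and (iii): now $\Delta^{m+1}_G x_k = e^{(-1)^{m+1}(m+1)!}$ is a nonidentity constant (so bounded and convergent), whereas $D^m(k^{m+1})$ is a degree-one polynomial in $k$, hence $\Delta^m_G x_k$ is geometrically unbounded and non-convergent.

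The only genuine hazard is keeping the sign convention straight: $\Delta_G x_k = x_k \ominus x_{k+1}$ is a backward-type geometric difference and therefore corresponds to $-D$ (not $+D$) on the logarithmic side. Once this translation is fixed, the proof reduces to the familiar polynomial identities for iterated finite differences of monomials, and no other serious obstacle remains.
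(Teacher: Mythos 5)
Your proof is correct and follows essentially the same route as the paper: the inclusions come from the recursion $\Delta^{m+1}_G x_k = \Delta^m_G x_k \ominus \Delta^m_G x_{k+1}$ together with the geometric triangle inequality $|a\ominus b|^G \le |a|^G \oplus |b|^G$, and strictness of (i) is witnessed by $x_k = e^{k^m}$, which is exactly the paper's example. Your further observation that (ii) and (iii) need the shifted exponent $e^{k^{m+1}}$ (since $\Delta^m_G e^{k^m}$ is a nonidentity constant, hence still bounded and convergent) is a correct and worthwhile refinement of the paper's bare remark that those cases are ``similar.''
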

\begin{proof}
$(i)$ Let $x \in c^G_0(\Delta^m_G).$ Since 
\begin{align*}
\left|\Delta^{m+1}_G x_k\right|^G &= \left|\Delta^m_G x_k \ominus \Delta^m_G x_{k+1}\right|^G\\
                                  & \leq \left|\Delta^m_G x_k \right|^G \oplus \left|\Delta^m_G x_{k+1}\right|^G \rightarrow 1 ~\text{as}~ k\rightarrow \infty.
\end{align*}
$\therefore$ we obtain $x \in c^G_0(\Delta^{m+1}_G).$ Thus $c^G_0(\Delta^m_G) \subset c^G_0(\Delta^{m+1}_G).$

 This inclusion is strict. For let 
\[ x= (e^{k^m})= (e, e^{2^m},e^{3^m},e^{4^m},....., e^{k^m},.......).\]
Then $ x \in c^G_0(\Delta^{m+1}_G)$ as $(m+1)^{\text{th}}$ geometric difference of $e^{k^m}$ is 1(geometric zero). But $ x \notin c^G_0(\Delta^m_G)$ as $m^{\text{th}}$ geometric difference of $e^{k^m}$ is a constant. Hence the inclusion is strict.

The proofs of $(ii)$ and $(iii)$ are similar to that of $(i).$
\end{proof}
\begin{lemma}
\begin{align*}
&(i) \quad c^G_0(\Delta^m_G) \subsetneq c^G(\Delta^m_G);\\
&(ii) \quad c^G(\Delta^m_G) \subsetneq l^G_\infty (\Delta^m_G).
\end{align*}
\end{lemma}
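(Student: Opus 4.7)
The plan is to reduce both statements to the classical nestings $c^G_0 \subsetneq c^G \subsetneq l^G_\infty$, transported through the $m$-th geometric difference operator $\Delta^m_G$. The two soft inclusions are immediate from the definitions recalled in the introduction: if $x \in c^G_0(\Delta^m_G)$ then $\Delta^m_G x$ is geometrically null, hence in particular geometrically convergent, so $x \in c^G(\Delta^m_G)$; and if $\Delta^m_G x \in c^G$ then it is bounded in $|\cdot|^G$, so $x \in l^G_\infty(\Delta^m_G)$. Both are one-line consequences of the meaning of the three reference spaces.

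The content of the lemma lies in strictness. For (i) I would reuse the sequence $x = (e^{k^m})$ already introduced in the proof of Lemma \ref{lemma1st}. Writing $x_k = e^{f(k)}$, the operator $\Delta_G$ acts on the exponent as the backward difference $f(k) - f(k+1)$, so $\Delta^m_G x$ corresponds to $m$-fold differencing of the polynomial $k^m$ in the exponent. This collapses to a nonzero constant (essentially $(-1)^m m!$), so $\Delta^m_G x$ is a nontrivial constant geometric sequence lying in $c^G \setminus c^G_0$, witnessing $x \in c^G(\Delta^m_G) \setminus c^G_0(\Delta^m_G)$.

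For (ii) I would prescribe an oscillating target $y \in l^G_\infty \setminus c^G$, say $y_k = e^{(-1)^k}$, and recover $x$ by $m$ successive geometric partial summations, which invert $\Delta_G$ up to free initial data. For $m=1$ this already produces the explicit periodic example $x = (1, e, 1, e, \ldots)$; for larger $m$ the construction is simply iterated $m$ times. The main obstacle is the back-solving step in (ii): I must verify that the recursively defined $x$ indeed lies in $\omega(G)$ and that $\Delta^m_G x$ genuinely reproduces the oscillating $y$ without any accidental telescoping that would restore convergence. Once that is settled, both strict inclusions follow purely from the established strict nesting $c^G_0 \subsetneq c^G \subsetneq l^G_\infty$ and the definitions of the three $\Delta^m_G$-spaces.
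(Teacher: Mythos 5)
Your proposal is correct and is in the spirit of what the paper intends: the paper itself gives no argument beyond the remark that the proofs are similar to those of Lemma \ref{lemma1st}, i.e.\ soft inclusions from $c^G_0\subset c^G\subset l^G_\infty$ applied to $\Delta^m_G x$, plus an explicit witness for strictness, and your part (i) with the witness $x=(e^{k^m})$ is exactly that (its $m$-th geometric difference is the constant $e^{(-1)^m m!}\neq 1$, hence convergent but not null). The only point to tidy up is the step you flag as the ``main obstacle'' in (ii): it is not actually an obstacle, but you should either discharge it or avoid it. Discharging it is routine, since on the level of exponents $\Delta^m_G$ is the $m$-fold backward difference ${_G\sum}_{v=0}^m(\ominus e)^{v_G}\odot e^{\binom{m}{v}}\odot x_{k+v}$, and the relation $\Delta^m_G x=y$ is a linear recurrence solvable for $x_{k+m}$ in terms of $x_k,\dots,x_{k+m-1}$ and $y_k$ with free initial data $x_1,\dots,x_m$; there is no possible ``accidental telescoping'' because the identity $\Delta^m_G x=y$ holds by construction. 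Better still, you can avoid the back-solving entirely with a direct witness valid for every $m$: take $x_k=e^{(-1)^k}$, so that each application of $\Delta_G$ multiplies the exponent $(-1)^k$ by $2$ (since $(-1)^k-(-1)^{k+1}=2(-1)^k$), giving $\Delta^m_G x_k=e^{2^m(-1)^k}$, which is geometrically bounded but not geometrically convergent; hence $x\in l^G_\infty(\Delta^m_G)\setminus c^G(\Delta^m_G)$. With that substitution your proof is complete and matches the level of detail the paper omits.
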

Proofs are similar to that of Lemma (\ref{lemma1st}).

Furthermore, since the sequence spaces $l_\infty^G(\Delta^m_G), c^G(\Delta^m_G)$ and $c_0^G(\Delta^m_G)$ are Banach spaces with continuous coordinates, that is, $\left\|x_n \ominus x\right\|^G_{\Delta_G}\rightarrow 1$ implies $\left|x^{(n)}_k \ominus x_k\right|^G \rightarrow 1 \forall k\in \mathbb{N}$ as $n \rightarrow \infty,$ these are also BK-spaces.
\begin{rem}
It can be easily proved that $c^G_0$ is a sequence algebra. But in general, $l_\infty^G(\Delta^m_G), c^G(\Delta^m_G)$ and $c_0^G(\Delta^m_G)$ are not sequence algebra. For let $x= (e^k), y=(e^{k^{m-1}}).$ Clearly $x, y \in c_0^G(\Delta^m_G).$ But
\[x\odot y= \left(e^k \odot e^{k^{m-1}}\right)= \left(e^{k^m}\right) \notin c_0^G(\Delta^m_G) ~\text{for}~ m\geq 2,\]
since $m^{\text{th}}$ geometric difference of $e^{k^m}$ is constant. 
\end{rem}
Let us define the operator
\[D: l_\infty^G(\Delta^m_G) \rightarrow l_\infty^G(\Delta^m_G) \quad \text{as}\]
$Dx= (1, 1, 1,...,1,  x_{m+1}, x_{m+2},...),$ where $x= (x_1, x_2, x_3,...,x_m, x_{m+1},...) \in l_\infty^G(\Delta^m_G).$ It is trivial that $D$ is a bounded linear operator on $l_\infty^G(\Delta^m_G).$ Furthermore, the set
\[D\left[l_\infty^G(\Delta^m_G)\right]= Dl_\infty^G(\Delta^m_G)=\{x=(x_k): x \in l_\infty^G(\Delta^m_G), x_1=x_2=...=x_m=1\}\]
is a subspace of $l_\infty^G(\Delta^m_G)$ and 
\begin{align*}
\left\|x\right\|^G_{\Delta_G}&=\left|x_1\right|^G\oplus \left|x_2\right|^G\oplus ... \oplus \left|x_m\right|^G\oplus \left\|\Delta^m_Gx\right\|^G_\infty \\
                             &= 1\oplus 1\oplus ...\oplus 1\oplus \left\|\Delta^m_Gx\right\|^G_\infty \\
														&= \left\|\Delta^m_Gx\right\|^G_\infty\\
\therefore \left\|x\right\|^G_{\Delta_G}&= \left\|\Delta^m_Gx\right\|^G_\infty ~\text{in}~ Dl_\infty^G(\Delta^m_G).
\end{align*}
Now let us define
\begin{gather}\label{eqn1}
\begin{aligned}
\Delta^m : Dl_\infty^G(\Delta^m_G) &\rightarrow l_\infty^G \\
                            \Delta^m_G x&=y =(\Delta^{m-1}_G x_k \ominus \Delta^{m-1}_G x_{k+1} ).      
\end{aligned}
\end{gather}
\textbf{$\Delta^m_{G}$ is a linear homomorphism:} Let $x, y \in Dl_\infty^G(\Delta^m_G).$ Then
\begin{align*}
\Delta^m_G (x_k \oplus y_k) &=  _G\sum^m_{v=0} (\ominus e )^{{v}_G}\odot e^{\binom{m}{v}}\odot(x_k \oplus y_k)\\
                            &= _G\sum^m_{v=0} (\ominus e )^{{v}_G}\odot e^{\binom{m}{v}}\odot x_k \oplus _G\sum^m_{v=0} (\ominus e )^{{v}_G}\odot e^{\binom{m}{v}}\odot y_k\\
														&=\Delta^m_G x_k\oplus \Delta^m_G y_k\\
					\therefore \quad \Delta^m_G (x \oplus y) &= \Delta^m_G x \oplus \Delta^m_G y.~ \text{For}~ \alpha \in \mathbb{C}(G)\\
\Delta^m_G (\alpha \odot x) &= \left(\Delta^m_G \alpha \odot x_k\right)\\
                                             &= \left(_G\sum^m_{v=0} (\ominus e )^{{v}_G}\odot e^{\binom{m}{v}}\odot \alpha \odot x_k\right)\\
																						 &= \left( \alpha \odot _G\sum^m_{v=0} (\ominus e )^{{v}_G}\odot e^{\binom{m}{v}}\odot x_k\right)\\
&= \alpha \odot \Delta^m_G\odot x.
\end{align*}
This implies that $\Delta^m_G $ is a linear homomorphism. Hence $Dl^G_\infty(\Delta^m_G)$ and $l^G_\infty $ are equivalent as topological spaces \cite{Maddox80}. $\Delta^m_G$ and $(\Delta^m_G)^{-1}$ are norm preserving and
\[\left\|\Delta^m_G\right\|^G_\infty=\left\|(\Delta^m_G)^{-1}\right\|^G_\infty=e. \]
Let $\left[l^G_\infty\right]^{'}$ and $\left[ Dl^G_\infty(\Delta^m_G)\right]^{'}$ denote the continuous duals of $l^G_\infty $ and $Dl^G_\infty(\Delta^m_G),$ respectively.

It can be shown that
\begin{align*}
s: \left[ Dl^G_\infty(\Delta^m_G)\right]^{'} &\rightarrow \left[l^G_\infty\right]^{'}\\
                                          f_{\Delta} &\rightarrow f_{\Delta} \circ (\Delta^m_G)^{-1}=f
\end{align*}
is a linear isometry. So $\left[ Dl^G_\infty(\Delta^m_G)\right]^{'}$ is equivalent to $\left[l^G_\infty\right]^{'}.$

In the same way, it can be shown that $Dc^G(\Delta^m_G)$ and  $Dc^G_0(\Delta^m_G)$ are equivalent as topological space to $c^G$ and $c^G_0 $, respectively. Also
\[ \left[Dc^G(\Delta^m_G)\right]^{'}\cong \left[Dc^G_0(\Delta^m_G)\right]^{'} \cong l^G_1, \]
where $l^G_1= \{x=(x_k): _G\sum_k |x_k|^G < \infty\}$.\vspace{-0.4cm}
\section{Dual spaces of $l^G_\infty(\Delta^m_G)$ and $c^G(\Delta^m_G)$}
In this section we construct the $\alpha$-dual spaces of $l^G_\infty(\Delta^m_G)$ and $c^G(\Delta^m_G).$ Also we show that these spaces are not perfect spaces.
\begin{lemma}\label{lemma3rd}
The following conditions (a) and (b) are equivalent:
\begin{align*}
&(a) \sup_k|x_k\ominus x_{k+1}|^G<\infty ~~ i.e. ~~ \sup_k|\Delta_G x_k|^G<\infty;\\
&(b)(i) \sup_k e^{k^{-1}}\odot|x_k|^G<\infty \text{~and}\\
& \quad (ii)\sup_k|x_k\ominus e^{{k(k+1)}^{-1}}\odot x_{k+1}|^G<\infty. 
\end{align*}
\end{lemma}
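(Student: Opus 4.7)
The plan is to recognize this as the geometric analog of the classical Kizmaz lemma: under the correspondence $e^{k^{-1}}\odot|x_k|^G \leftrightarrow |x_k|/k$ and $e^{k/(k+1)}\odot x_{k+1} \leftrightarrow \tfrac{k}{k+1}x_{k+1}$, the statement becomes the multiplicative form of the standard equivalence. My key algebraic tool will be the pair of decompositions
\[
x_k \ominus e^{k/(k+1)}\odot x_{k+1} \;=\; \Delta_G x_k \,\oplus\, e^{1/(k+1)}\odot x_{k+1},
\]
\[
\Delta_G x_k \;=\; \bigl(x_k \ominus e^{k/(k+1)}\odot x_{k+1}\bigr) \,\ominus\, e^{1/(k+1)}\odot x_{k+1},
\]
both of which, after translating $\oplus,\ominus,\odot$ into ordinary multiplication, division and exponentiation, reduce to the trivial identity $x_k/x_{k+1}^{k/(k+1)} = (x_k/x_{k+1})\cdot x_{k+1}^{1/(k+1)}$.

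For $(a)\Rightarrow(b)(i)$, I would telescope using $x_k = x_1 \ominus {_G\sum_{j=1}^{k-1}}\Delta_G x_j$, combine with the geometric triangle inequality $|a\ominus b|^G\le|a|^G\oplus|b|^G$ (which follows from $|x\oplus y|^G\le|x|^G\oplus|y|^G$ together with $|\ominus y|^G=|y|^G$), and set $M:=\sup_k|\Delta_G x_k|^G$ to obtain $|x_k|^G \le |x_1|^G \oplus e^{k-1}\odot M$. Applying $e^{1/k}\odot$ on both sides and using $e^{1/k}\odot e^{k-1}=e^{(k-1)/k}$, the right-hand side becomes $e^{1/k}\odot|x_1|^G\oplus e^{(k-1)/k}\odot M$, which is uniformly bounded in $k$. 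For $(a)\Rightarrow(b)(ii)$, the first decomposition above together with the triangle inequality yields $|x_k\ominus e^{k/(k+1)}\odot x_{k+1}|^G \le M \oplus e^{1/(k+1)}\odot|x_{k+1}|^G$, and the second summand is bounded by the (b)(i) just established.

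For $(b)\Rightarrow(a)$, apply the second decomposition: the two terms on the right are bounded by hypothesis (b)(ii) and by (b)(i) evaluated at index $k+1$, respectively, so $\sup_k|\Delta_G x_k|^G<\infty$. The main obstacle is purely notational, namely verifying the two decomposition identities in the geometric language and correctly tracking the distributivity $a\odot(x\oplus y)=(a\odot x)\oplus(a\odot y)$; once that bookkeeping is done, each implication reduces to a single application of the triangle inequality.
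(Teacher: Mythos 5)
Your proof is correct, and it follows the same overall strategy as the paper --- telescoping $\Delta_G$ to control $|x_k|^G$ for $(a)\Rightarrow(b)(i)$, then an algebraic decomposition of $x_k\ominus e^{k(k+1)^{-1}}\odot x_{k+1}$ plus the geometric triangle inequality for the remaining implications --- but your decomposition is not the one the paper uses. The paper writes $x_k\ominus e^{k(k+1)^{-1}}\odot x_{k+1}= e^{k(k+1)^{-1}}\odot(x_k\ominus x_{k+1})\oplus e^{(k+1)^{-1}}\odot x_k$, keeping the weight $e^{k(k+1)^{-1}}$ on $\Delta_G x_k$ with the residual term in $x_k$; you instead write $x_k\ominus e^{k(k+1)^{-1}}\odot x_{k+1}=\Delta_G x_k\oplus e^{(k+1)^{-1}}\odot x_{k+1}$, with weight $e$ on $\Delta_G x_k$ and the residual in $x_{k+1}$. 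Both identities are the same rearrangement of $x_k\,x_{k+1}^{-k/(k+1)}$, but yours pays off in the converse direction $(b)\Rightarrow(a)$: you bound $|\Delta_G x_k|^G$ directly by $|x_k\ominus e^{k(k+1)^{-1}}\odot x_{k+1}|^G\oplus e^{(k+1)^{-1}}\odot|x_{k+1}|^G$, whereas the paper only obtains a bound on $e^{k(k+1)^{-1}}\odot|\Delta_G x_k|^G$ and leaves implicit the final step that boundedness of $\left(|\Delta_G x_k|^G\right)^{k/(k+1)}$ forces boundedness of $|\Delta_G x_k|^G$ itself (true since $k/(k+1)\ge 1/2$ and geometric absolute values are $\ge 1$). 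Your one-shot telescoping $x_k=x_1\ominus{_G\sum_{j=1}^{k-1}}\Delta_G x_j$ for $(b)(i)$ is likewise a mild streamlining of the paper's two-step estimate of $|x_1\ominus x_{k+1}|^G$ and then $|x_k|^G$; in both versions the uniform bound on $e^{k^{-1}}\odot|x_k|^G$ ultimately rests, as you note, on $|x_1|^G\ge 1$ and $M\ge 1$ so that the fractional exponents can be discarded.
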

\begin{proof}
Let (a) be true i.e. $\sup_k|x_k\ominus x_{k+1}|^G<\infty .$
\begin{align*}
\text{Now~} |x_1\ominus x_{k+1}|^G&=\left|{_G\sum^k_{v=1}}{(x_v \ominus x_{v+1})}\right|^G\\
 &=\left|{_G\sum^k_{v=1}}{\Delta_Gx_v}\right|^G\\
&\leq {_G\sum^k_{v=1}}\left|\Delta_Gx_v\right|^G=O(e^k)\\
	\text{and~} |x_k|^G  &=|x_1\ominus x_1\oplus x_{k+1}\oplus x_k\ominus x_{k+1}|^G\\
	 &\leq |x_1|^G\oplus |x_1\ominus x_{k+1}|^G\oplus |x_k\ominus x_{k+1}|^G=O(e^k).			
	 \end{align*}
This implies that $\sup_k e^{k^{-1}}\odot|x_k|^G<\infty.$ This completes the proof of $b(i).$\\
Again
\begin{align*}
\sup_k\left|x_k\ominus e^{{k(k+1)}^{-1}}\odot x_{k+1}\right|^G &=\left|\left\{e^{{(k+1)}}\odot e^{{(k+1)}^{-1}}\right\}\odot x_k\ominus e^{{k(k+1)}^{-1}}\odot x_{k+1} \right|^G \\
  &=\left|\left\{(e^k \oplus e)\odot e^{{(k+1)}^{-1}}\right\}\odot x_k\ominus e^{{k(k+1)}^{-1}}\odot x_{k+1} \right|^G \\
  &=\left|\left\{e^{k(k+1)^{-1}}\odot x_k\oplus e^{(k +1)^{-1}}\odot x_k \right\}\ominus e^{k(k+1)^{-1}}\odot x_{k+1}\right|^G\\
	&=\left|\left\{e^{k(k+1)^{-1}}\odot(x_k\ominus x_{k+1})\right\}\oplus \left\{e^{(k+1)^{-1}}\odot x_k\right\}\right|^G\\
	&\leq e^{k(k+1)^{-1}}\odot \left|x_k\ominus x_{k+1}\right|^G\oplus e^{(k+1)^{-1}}\odot \left|x_k\right|^G\\
			&=O(e).
\end{align*}
Therefore $\sup_k|x_k\ominus e^{{k(k+1)}^{-1}}\odot x_{k+1}|^G<\infty.$ This completes the proof of $b(ii).$

Conversely let $(b)$ be true. Then
\begin{align*}
\left|x_k\ominus e^{k(k+1)^{-1}}\odot x_{k+1}\right|^G&=\left|e^{(k+1)(k+1)^{-1}}\odot x_k\ominus e^{k(k+1)^{-1}}\odot x_{k+1}\right|^G\\
                                                         &\geq e^{k(k+1)^{-1}}\odot|x_k\ominus x_{k+1}|^G\ominus e^{(k+1)^{-1}}\odot |x_k|^G
\end{align*}
i.e. $e^{k(k+1)^{-1}}\odot|x_k\ominus x_{k+1}|^G\leq e^{(k+1)^{-1}}\odot |x_k|^G\oplus \left|x_k\ominus e^{k(k+1)^{-1}}\odot x_{k+1}\right|^G.$\\
Thus $\sup_k|x_k\ominus x_{k+1}|^G<\infty$ as $b(i)$ and $b(ii)$ hold.
\end{proof}
\begin{corollary}\label{corollary1} The following conditions $(a)$ and $(b)$ are equivalent
\begin{align*}
&(a) \sup_k \left|\Delta^{m-1}_G x_k \ominus \Delta^{m-1}_G x_{k+1}\right|^G < \infty;\\
&(b) (i) \sup_k e^{k^{-1}}\odot\left|\Delta^{m-1}_G x_k\right|^G < \infty\\
& \quad (ii) \sup_k \left|\Delta^{m-1}_G x_k \ominus e^{{k(k+1)}^{-1}}\odot \Delta^{m-1}_G x_{k+1}\right|^G < \infty.
\end{align*} 
\end{corollary}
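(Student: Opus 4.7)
The plan is to obtain this corollary as a direct specialization of Lemma \ref{lemma3rd}, with no new combinatorial work required. Concretely, I would set $y_k := \Delta^{m-1}_G x_k$ and apply Lemma \ref{lemma3rd} to the sequence $y=(y_k)$ instead of to $x=(x_k)$.

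First, I would observe the translation of condition (a). Using the recursive definition $\Delta^m_G x_k = \Delta^{m-1}_G x_k \ominus \Delta^{m-1}_G x_{k+1}$ given in the definition of $\Delta^m_G$, the quantity $|y_k \ominus y_{k+1}|^G$ is exactly $|\Delta^{m-1}_G x_k \ominus \Delta^{m-1}_G x_{k+1}|^G$, which is the content of (a) in the corollary. Next, the conditions (b)(i) and (b)(ii) of Lemma \ref{lemma3rd} applied to $(y_k)$ read
\[
\sup_k e^{k^{-1}} \odot |y_k|^G < \infty, \qquad \sup_k |y_k \ominus e^{k(k+1)^{-1}} \odot y_{k+1}|^G < \infty,
\]
which, after substituting $y_k = \Delta^{m-1}_G x_k$, are precisely conditions (b)(i) and (b)(ii) of the corollary.

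Since Lemma \ref{lemma3rd} asserts the equivalence (a)$\iff$(b) for \emph{any} sequence in $\omega(G)$, and the sequence $(\Delta^{m-1}_G x_k)$ is itself such a sequence (it lies in $\omega(G)$ because $\omega(G)$ is closed under the algebraic operations $\oplus$ and $\odot$ used to form geometric differences), the equivalence transfers verbatim. I would therefore write a short proof of the form: ``Apply Lemma \ref{lemma3rd} to the sequence $y=(y_k)$ with $y_k:=\Delta^{m-1}_G x_k$,'' followed by a single display identifying the three conditions.

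The only conceivable obstacle is pedantic: one has to be sure that the estimates in the proof of Lemma \ref{lemma3rd} (the telescoping identity, the triangle inequality for $|\cdot|^G$, and the manipulation of $e^{k(k+1)^{-1}}\odot$) use nothing specific to $x$ beyond its being an element of $\omega(G)$. A quick inspection shows they do not, so the substitution is legitimate and nothing further needs to be proved.
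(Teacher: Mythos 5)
Your proposal is correct and is essentially identical to the paper's own proof, which simply substitutes $\Delta^{m-1}_G x_k$ for $x_k$ in Lemma \ref{lemma3rd}; you have merely spelled out the substitution in more detail.
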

\begin{proof}
By putting $\Delta^{m-1}_G x_k$ instead of $x_k$ in Lemma (\ref{lemma3rd}), results are obvious.
\end{proof}
\begin{lemma}\label{lemma4th}
\[\sup_k e^{k^{-i}}\odot \left|\Delta_G x_k\right|^G < \infty ~\text{implies}~ \sup_k e^{^{-(i+1)}}\odot |x_k|^G<\infty ~\forall i\in \mathbb{N}.\]
\end{lemma}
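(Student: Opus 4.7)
The plan is to adapt the telescoping argument from the proof of Lemma~\ref{lemma3rd} (the direction $(a)\Rightarrow b(i)$) to the polynomial-growth regime $i \in \mathbb{N}$. First I would translate the hypothesis using the geometric identity $e^a \odot y = y^a$: the assumption $\sup_k e^{k^{-i}} \odot |\Delta_G x_k|^G < \infty$ is equivalent to the existence of a constant $M > 1$ with $|\Delta_G x_k|^G \le M^{k^i}$ for all $k$.

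Next, I would telescope via
\[
x_1 \ominus x_{k+1} = {_G\sum_{v=1}^{k}} \Delta_G x_v
\]
and apply the geometric triangle inequality together with the bound above:
\[
|x_1 \ominus x_{k+1}|^G \le {_G\sum_{v=1}^{k}} |\Delta_G x_v|^G \le {_G\sum_{v=1}^{k}} M^{v^i} = M^{\sum_{v=1}^{k} v^i} = O\bigl(e^{k^{i+1}}\bigr),
\]
where I invoke the elementary estimate $\sum_{v=1}^{k} v^i = O(k^{i+1})$ and interpret $O(e^{k^{i+1}})$ in the geometric sense already used in the proof of Lemma~\ref{lemma3rd}. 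A second triangle inequality, applied to the identity $x_k = x_1 \ominus (x_1 \ominus x_k)$, then yields $|x_k|^G \le |x_1|^G \oplus |x_1 \ominus x_k|^G = O(e^{k^{i+1}})$.

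Finally, taking the geometric $k^{-(i+1)}$-th power of both sides, namely computing $e^{k^{-(i+1)}} \odot |x_k|^G = (|x_k|^G)^{k^{-(i+1)}}$, produces a quantity that is uniformly bounded in $k$, which is exactly $\sup_k e^{k^{-(i+1)}} \odot |x_k|^G < \infty$. The substantive analytic content is the same telescoping-plus-polynomial-sum argument already present in Lemma~\ref{lemma3rd}; the main obstacle here is purely notational, namely keeping track of how the prefactor $e^{k^{-i}} \odot(\cdot)$ corresponds to ordinary exponentiation $(\cdot)^{k^{-i}}$, and beyond this bookkeeping no new analytic difficulty arises.
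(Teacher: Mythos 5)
Your proposal is correct and follows essentially the same route as the paper: the paper nominally phrases the proof as an induction on $i$, but its "inductive step" is precisely your direct telescoping argument ($|x_1\ominus x_{k+1}|^G\le {_G\sum_{v=1}^{k}}|\Delta_G x_v|^G=O(e^{k^{i+1}})$, followed by a second triangle inequality to bound $|x_k|^G$) and never actually invokes the inductive hypothesis. The only cosmetic difference is that you estimate the exponent via $\sum_{v=1}^{k}v^i=O(k^{i+1})$ whereas the paper uses the cruder bound $\left(e^{k^i}\right)^k$; both give $O(e^{k^{i+1}})$.
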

\begin{proof}
For $i=1$ it is obvious from the Lemma (\ref{lemma3rd}). Let the result be true for $i=n.$ i.e. $\sup_k e^{k^{-n}}\odot|\Delta_G x_k|^G<\infty.$ Then
\begin{align*}
|x_k\ominus x_{k+1}|^G&=|  _G\sum^k_{v=1}\Delta_G x_k|^G\\
                                          &\leq~ _G\sum^k_{v=1}|\Delta_G x_k|^G=\text{O}\left({ \left(e^{k^n}\right)}^k\right)=\text{O}\left(e^{k^{(n+1)}}\right), ~\text{as}~ \sup_k e^{k^{-n}}\odot|\Delta_G x_k|^G<\infty\\
\text{and}~~ |x_k|^G &=|x_k \oplus x_1\ominus x_1\oplus x_{k+1}\ominus x_{k+1}|^G\\
                                      &\leq |x_1|^G \oplus |x_1 \ominus x_{k+1}|^G\oplus |x_k \ominus x_{k+1}|^G= \text{O}\left(e^{k^{(n+1)}}\right).
\end{align*}
From this we obtain, $\sup_k e^{k^{-(n+1)}} \odot |x_k|^G< \infty.$
Thus $\sup_k e^{k^{-(i+1)}} \odot |x_k|^G< \infty \quad\forall i\in \mathbb{N}.$
\end{proof}
\begin{lemma}\label{lemma5th}
\begin{align*}
&\sup_k e^{k^{-i}}\odot \left|\Delta^{m-1}_G x_k\right|^G < \infty ~\text{implies}\\
&\sup_k e^{^{-(i+1)}}\odot |\Delta^{m-{(i+1)}}_G x_k|^G<\infty ~\forall i, m \in \mathbb{N}~\text{and}~ 1\leq i <m.
\end{align*}
\end{lemma}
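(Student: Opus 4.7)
The plan is to reduce Lemma~\ref{lemma5th} to an iterated application of Lemma~\ref{lemma4th}, since the setting differs from that lemma only by substituting successively lower-order geometric differences of $x$ for the generic sequence $x_k$ there. The natural approach is finite induction on $i$, with the constraint $1 \le i < m$ precisely ensuring that every difference operator $\Delta^{m-(j+1)}_G$ that appears carries a nonnegative superscript and so is genuinely defined.

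For the base case $i = 1$, I would set $y_k = \Delta^{m-2}_G x_k$, so that $\Delta_G y_k = \Delta^{m-1}_G x_k$. The hypothesis $\sup_k e^{k^{-1}} \odot |\Delta^{m-1}_G x_k|^G < \infty$ then reads $\sup_k e^{k^{-1}} \odot |\Delta_G y_k|^G < \infty$, and Lemma~\ref{lemma4th} (applied with its own parameter equal to $1$) produces $\sup_k e^{k^{-2}} \odot |y_k|^G < \infty$ directly. This is exactly the asserted conclusion at $i = 1$.

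For the inductive step, I would assume the statement for some $i$ with $i+1 < m$ and set $z_k = \Delta^{m-(i+2)}_G x_k$, so that $\Delta_G z_k = \Delta^{m-(i+1)}_G x_k$. The weighted bound on $\Delta_G z_k$ supplied by the inductive hypothesis is then fed into Lemma~\ref{lemma4th} once more, transferring it to $z_k$ itself with the weight exponent incremented in the way dictated by that lemma. After relabelling, this is the statement at index $i+1$, and the induction terminates at $i = m-1$.

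The main obstacle I anticipate is the bookkeeping of the weight exponent, since each invocation of Lemma~\ref{lemma4th} turns a bound of the form $|\Delta_G y_k|^G = O(e^{k^r})$ into $|y_k|^G = O(e^{k^{r+1}})$, so naive iteration escalates the absolute value of the weight exponent by one at every reduction of the difference order. The cleanest presentation is therefore to iterate Lemma~\ref{lemma4th} directly $i$ times along the chain $\Delta^{m-1}_G x \to \Delta^{m-2}_G x \to \cdots \to \Delta^{m-(i+1)}_G x$ and read off the resulting weight exponent at the end. Once this iteration is laid out, all of the remaining geometric-arithmetic manipulations — the telescoping identity $\Delta^{\ell-1}_G x_v \ominus \Delta^{\ell-1}_G x_{v+1} = \Delta^{\ell}_G x_v$, the geometric triangle inequality, and the $O$-estimates on geometric partial sums — are already carried out in the proof of Lemma~\ref{lemma4th}, so no genuinely new calculation is required beyond the substitutional bookkeeping.
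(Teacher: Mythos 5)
Your proposal is correct and takes essentially the same route as the paper: the paper's entire proof is the one-line substitution of $\Delta^{m-i}_G x_k$ for $\Delta_G x_k$ in Lemma~\ref{lemma4th}, and your finite induction is just that substitution iterated along the chain $\Delta^{m-1}_G x \to \Delta^{m-2}_G x \to \cdots \to \Delta^{m-(i+1)}_G x$ with the weight exponent incremented at each step. The one point worth recording is that the hypothesis as printed ($\Delta^{m-1}_G x_k$ weighted by $e^{k^{-i}}$) does not literally match either argument when $i\ge 2$; both you and the paper in effect establish the version with hypothesis $\sup_k e^{k^{-i}}\odot\left|\Delta^{m-i}_G x_k\right|^G<\infty$ (equivalently, the chained statement launched from the $i=1$ hypothesis), which is exactly what Corollary~\ref{corollary2} uses, so the discrepancy is a typo in the statement rather than a gap in your proof.
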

\begin{proof}
Putting $\Delta^{m-i}_G x_k$ instead of $\Delta_G x_k$ in Lemma (\ref{lemma4th}), the result is immediate.
\end{proof}
\begin{corollary}\label{corollary2}
$\sup_k e^{k^{-1}}\odot\left|\Delta^{m-1}_G x_k\right|<\infty$ implies $\sup_k e^{k^{-m}}\odot\left|x_k\right|<\infty.$
\end{corollary}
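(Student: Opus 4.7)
The plan is to chain Lemma \ref{lemma5th} iteratively, starting from the hypothesis. Each invocation lowers the order of the geometric difference operator by one and simultaneously raises the $k^{-1}$-exponent in the weight by one, so after $m-1$ applications we end up with the weight $e^{k^{-m}}$ sitting against $x_k = \Delta^{0}_G x_k$, which is exactly the conclusion.

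Concretely, I would proceed by a finite induction on $j \in \{1, 2, \ldots, m\}$ to establish the family of statements
\begin{equation*}
P(j):\quad \sup_k e^{k^{-j}} \odot \left|\Delta^{m-j}_G x_k\right|^G < \infty .
\end{equation*}
The base case $P(1)$ is simply the hypothesis of the corollary. For the inductive step, suppose $P(j)$ holds for some $j$ with $1 \le j < m$. This assertion is precisely the hypothesis of Lemma \ref{lemma5th} with the choice $i = j$, whose conclusion reads $\sup_k e^{k^{-(j+1)}} \odot |\Delta^{m-(j+1)}_G x_k|^G < \infty$, i.e.\ $P(j+1)$. Iterating this argument $m-1$ times and setting $j = m$ yields $\sup_k e^{k^{-m}} \odot |x_k|^G < \infty$, as required.

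I do not anticipate any genuine analytic obstacle here, since all the substantive geometric-arithmetic estimates — in particular the telescoping bound $|x_1 \ominus x_{k+1}|^G \leq {_G\sum_{v=1}^{k}} |\Delta_G x_v|^G$ and the weighted comparison between $|\Delta_G x_k|^G$ and $|x_k|^G$ — have already been absorbed into the proofs of Lemmas \ref{lemma3rd} and \ref{lemma4th}, and are propagated through by the substitution trick used to establish Lemma \ref{lemma5th}. The only point that requires mild care is bookkeeping: one must verify that the output of the $j$-th induction step matches on the nose the hypothesis of Lemma \ref{lemma5th} invoked at index $i = j$, which it does by construction, since both read $\sup_k e^{k^{-j}}\odot |\Delta^{m-j}_G x_k|^G < \infty$.
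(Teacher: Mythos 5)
Your proof is correct and is essentially the paper's own argument: the paper likewise chains Lemma \ref{lemma5th} step by step, starting from $i=1$ and "continuing the process" until the difference order drops to zero and the weight becomes $e^{k^{-m}}$; you have merely formalized that chain as a finite induction on $j$. (Both you and the paper implicitly read the hypothesis of Lemma \ref{lemma5th} as $\sup_k e^{k^{-i}}\odot|\Delta^{m-i}_G x_k|^G<\infty$, correcting the evident misprint $\Delta^{m-1}_G$ in its printed statement.)
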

\begin{proof}
In Lemma (\ref{lemma5th}) putting $i=1$, we get
\begin{align*}
&\sup_k e^{k^{-1}}\odot \left|\Delta^{m-1}_G x_k\right|^G <\infty &\Rightarrow \sup_k e^{k^{-2}}\odot \left|\Delta^{m-2}_G x_k\right|^G <\infty\\
\text{Similarly,}&&\\
&\sup_k e^{k^{-2}}\odot \left|\Delta^{m-2}_G x_k\right|^G <\infty &\Rightarrow \sup_k e^{k^{-3}}\odot \left|\Delta^{m-3}_G x_k\right|^G <\infty.
\end{align*}
Continuing the process we get
\begin{align*}
&\sup_k e^{k^{-(m-1)}}\odot \left|\Delta^1_G x_k\right|^G <\infty &\Rightarrow \sup_k e^{k^{-m}}\odot \left|\Delta^0_G x_k\right|^G <\infty\\
\text{Thus}\quad &\sup_k e^{k^{-1}}\odot\left|\Delta^{m-1}_G x_k\right|<\infty &\Rightarrow\sup_k e^{k^{-m}}\odot\left|x_k\right|<\infty.
\end{align*}
\end{proof}
\begin{corollary}\label{corollary3}
If $x\in l^G_\infty(\Delta^m_G)$~ then ~ $\sup_k e^{k^{-m}}\odot|x_k|^G<\infty.$
\end{corollary}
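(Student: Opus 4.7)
The plan is to chain the two preceding corollaries in the paper. Starting from the hypothesis $x \in l^G_\infty(\Delta^m_G)$, which by definition means $\sup_k |\Delta^m_G x_k|^G < \infty$, I first want to rewrite $\Delta^m_G x_k = \Delta^{m-1}_G x_k \ominus \Delta^{m-1}_G x_{k+1}$ so that condition (a) of Corollary \ref{corollary1} is exactly satisfied. Then the equivalence in Corollary \ref{corollary1} yields condition (b)(i), namely
\[
\sup_k e^{k^{-1}} \odot \left|\Delta^{m-1}_G x_k\right|^G < \infty.
\]

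Next I would feed this directly into Corollary \ref{corollary2}, whose hypothesis is precisely the above bound and whose conclusion is the desired $\sup_k e^{k^{-m}} \odot |x_k|^G < \infty$. So the whole argument is a two-line citation chain: Corollary \ref{corollary1} ($(a)\Rightarrow(b)(i)$) followed by Corollary \ref{corollary2}.

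There is no real obstacle here, since the heavy lifting (the inductive telescoping on the index $i$ from $1$ up to $m$) was already carried out in the proof of Corollary \ref{corollary2} via iterated application of Lemma \ref{lemma5th}. The only thing to be careful about is to make sure the indices match: the exponent of $e$ produced by Corollary \ref{corollary2} is $k^{-m}$, which matches the target weight in Corollary \ref{corollary3}, and the condition (b)(ii) of Corollary \ref{corollary1} is not needed for the present direction because we are only extracting the weighted-boundedness consequence. Hence the proof reduces to one sentence: apply Corollary \ref{corollary1} to obtain $\sup_k e^{k^{-1}} \odot |\Delta^{m-1}_G x_k|^G < \infty$, then apply Corollary \ref{corollary2} to conclude.
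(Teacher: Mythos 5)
Your proposal is correct and matches the paper's own proof exactly: the paper also unfolds the hypothesis to $\sup_k|\Delta^{m-1}_G x_k \ominus \Delta^{m-1}_G x_{k+1}|^G<\infty$, applies Corollary~\ref{corollary1} to get $\sup_k e^{k^{-1}}\odot|\Delta^{m-1}_G x_k|^G<\infty$, and then cites Corollary~\ref{corollary2}. (Your write-up even fixes a small typo in the paper, which writes $\Delta^m_G$ where $\Delta^{m-1}_G$ is meant in the intermediate step.)
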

\begin{proof}
\begin{align*}
x\in l^G_\infty(\Delta^m_G)&\Rightarrow \Delta^m_Gx \in l^G_\infty\\
                           &\Rightarrow \sup_k |\Delta^m_Gx _k|^G<\infty\\
													&\Rightarrow \sup_k |\Delta^{m-1}_Gx _k \ominus \Delta^{m-1}_Gx _{k+1}|^G<\infty\\
													&\Rightarrow \sup_k e^{k^{-1}}\odot |\Delta^m_Gx _k|^G<\infty \quad \text{by Corollary (\ref{corollary1})}\\
													&\Rightarrow \sup_k e^{k^{-m}}\odot |x _k|^G<\infty \quad \text{by Corollary (\ref{corollary2}).}
\end{align*}
\end{proof}\vspace{-0.4cm}
\section{$\alpha-,\beta-,  \gamma-$ duals}
\begin{defn} \cite{Garling67, KotheToplitz69, KotheToplitz34, Maddox80} If $X$ is a sequence space, it is defined that
\begin{enumerate}
\item[(i)] $X^\alpha=\{a=(a_k) : \sum_{k=1}^\infty |a_k x_k|<\infty, \, \text{for each} \,x\in X\};$
\item[(ii)] $X^\beta=\{a=(a_k) : \sum_{k=1}^\infty a_k x_k \, \text{is convergent, for each} \,x\in X\};$
\item[(iii)] $ X^\gamma=\{a=(a_k) : \sup_n|\sum_{k=1}^n a_k x_k|<\infty, \, \text{for each} \,x\in X\}.$
\end{enumerate}
\end{defn}

Then $X^\alpha, X^\beta$ and $X^\gamma$ are called $\alpha-$dual (or K\"{o}the-Toeplitz dual), $\beta-$dual (or generalized K\"{o}the-Toeplitz dual) and $\gamma-$dual spaces of $X,$ respectively. Then $X^\alpha \subset X^\beta \subset X^\gamma.$ If $X\subset Y,$ then $Y^\dag\subset X^\dag,$ for $\dag=\alpha, \beta$ or $\gamma.$ It is clear that $X\subset \left(X^\alpha \right)^\alpha=X^{\alpha \alpha}.$ If $X= X^{\alpha \alpha}$ then $X$ is called $\alpha-$space. $\alpha-$space is also called a K\"{o}the space or a perfect sequence space.
 
Then we defined and have proved that\cite{KhirodBipan}\\
\qquad $\left(sl_\infty^G(\Delta_G)\right)^\alpha=\left\{a=(a_k): {_G\sum_{k=1}^\infty} e^k\odot |a_k|^G<\infty\right\}$\\
\qquad $\left(sl_\infty^G(\Delta_G)\right)^\beta=  \left\{a=(a_k): {_G\sum_{k=1}^\infty} e^k\odot a_k \text{~is convergent with~} {_G\sum_{k=1}^\infty}|R_k|^G<\infty\right\}$\\
\qquad $\left(sl_\infty^G(\Delta_G)\right)^\gamma=\left\{a=(a_k): \sup_n|{_G\sum_{k=1}^n} e^k\odot a_k|^G<\infty, {_G\sum_{k=1}^\infty}|R_k|^G<\infty\right\}$\\
 where $R_k = {_G\sum_{n=k+1}^\infty} a_n$ and $s: l^G_\infty(\Delta_G)\rightarrow l^G_\infty(\Delta_G), x\rightarrow sx=y=(1, x_2, x_3,....).$
\begin{lemma}\label{lemma6th}
Let $U_1=\{a=(a_k):~ _G\sum_k e^{k^m}\odot |a_k|^G<\infty \}.$ Then $\left[Dl^G_\infty(\Delta^m_G)\right]^\alpha=U_1.$ 
\end{lemma}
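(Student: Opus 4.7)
The plan is a standard two-sided inclusion. The first inclusion $U_1 \subseteq [Dl^G_\infty(\Delta^m_G)]^\alpha$ should follow at once from the sharp growth bound on elements of $l^G_\infty(\Delta^m_G)$ recorded in Corollary~\ref{corollary3}. The reverse inclusion will require a single extremal witness $y\in Dl^G_\infty(\Delta^m_G)$ whose coordinates grow like $e^{k^m}$ exactly, so that evaluating the $\alpha$-dual condition against $y$ forces the defining condition of $U_1$. Throughout, I plan to translate to the additive picture via $\ln$: since $|a_k\odot x_k|^G = |a_k|^G\odot|x_k|^G = e^{|\ln a_k|\cdot|\ln x_k|}$ and $_G\sum$ is a product, the $\alpha$-dual condition $_G\sum_k|a_k\odot x_k|^G<\infty$ becomes the classical condition $\sum_k|\ln a_k|\cdot|\ln x_k|<\infty$, while $a\in U_1$ becomes $\sum_k k^m|\ln a_k|<\infty$.

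For the first inclusion, fix $a \in U_1$ and $x \in Dl^G_\infty(\Delta^m_G)$. Corollary~\ref{corollary3} gives $|\ln x_k|\le M\,k^m$ for some constant $M$. In the additive picture this immediately yields $\sum_k|\ln a_k|\cdot|\ln x_k| \le M\sum_k k^m|\ln a_k|<\infty$, so $a\in[Dl^G_\infty(\Delta^m_G)]^\alpha$.

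The substance is in the reverse inclusion. Following the classical Et--\c{C}olak computation for $l_\infty(\Delta^m)$, I propose the witness $y_k = 1$ for $k=1,\ldots,m$ and $y_k = e^{k^m}$ for $k>m$; the first $m$ coordinates are set to the geometric zero so that $y$ lies in the subspace $Dl^G_\infty(\Delta^m_G)$ and not merely in $l^G_\infty(\Delta^m_G)$. The main obstacle is verifying $\sup_k|\Delta^m_G y_k|^G<\infty$: the plan is to expand $\Delta^m_G y_k = \,{_G\sum_{v=0}^m}(\ominus e)^{v_G}\odot e^{\binom{m}{v}}\odot y_{k+v}$, use $(\ominus e)^{v_G}=e^{(-1)^v}$ together with $e^a\odot e^b=e^{ab}$ to rewrite this as $e^{\sum_{v=0}^m(-1)^v\binom{m}{v}\ln y_{k+v}}$, and observe that the exponent is precisely the classical $m$-th forward difference of $\ln y_k$. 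For $k\ge m+1$ this evaluates to the constant $(-1)^m m!$ (via the textbook identity $\Delta^m k^m = (-1)^m m!$ under the paper's convention $\Delta a_k = a_k - a_{k+1}$), and for $k=1,\ldots,m$ it is a finite boundary value depending only on $m$ and $k$. Hence $y\in Dl^G_\infty(\Delta^m_G)$. The inclusion then closes by contraposition: if $a\notin U_1$ then $\sum_k k^m|\ln a_k|=\infty$, and since each $|\ln a_k|$ is finite (as $a_k\in\mathbb{C}(G)=\mathbb{C}\setminus\{0\}$) the finite head $\sum_{k\le m}k^m|\ln a_k|$ cannot absorb the divergence, so $\sum_{k>m}k^m|\ln a_k|=\infty$; therefore $_G\sum_k|a_k\odot y_k|^G = e^{\sum_{k>m}k^m|\ln a_k|} = \infty$, giving $a\notin[Dl^G_\infty(\Delta^m_G)]^\alpha$.
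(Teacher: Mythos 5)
Your proposal is correct and follows essentially the same route as the paper: the easy inclusion via the growth bound $\sup_k e^{k^{-m}}\odot|x_k|^G<\infty$ from Corollary~\ref{corollary3}, and the reverse inclusion via the same extremal witness $x_k=1$ for $k\le m$, $x_k=e^{k^m}$ for $k>m$ (the paper argues directly that membership in the dual forces $a\in U_1$, while you phrase it contrapositively, which is the same argument). The only difference is that you actually verify the witness lies in $Dl^G_\infty(\Delta^m_G)$ by computing $\Delta^m_G y_k$ and reducing to the classical identity $\Delta^m k^m=(-1)^m m!$, a step the paper asserts without proof.
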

\begin{proof}
Let $a\in U_1,$ then using Corollary (\ref{corollary1}) for $x\in Dl^G_\infty(\Delta^m_G),$ we have\\
$_G\sum_k|a_k\odot x_k|^G= ~ _G\sum_k \left\{e^{k^m}\odot |a_k|^G\right\} \odot \left\{e^{k^{-m}}\odot |x_k|^G\right\}<\infty $ by Corollary (\ref{corollary2}).\\
This implies that $a \in \left[Dl^G_\infty(\Delta^m_G)\right]^\alpha.$ Therefore
\begin{equation}\label{eqn2}
U_1 \subseteq \left[Dl^G_\infty(\Delta^m_G)\right]^\alpha.
\end{equation}

Conversely, let $a\in \left[Dl^G_\infty(\Delta^m_G)\right]^\alpha.$ Then $_G\sum_k|a_k\odot x_k|^G<\infty$ (by definition of $\alpha-$dual) for $x\in Dl^G_\infty(\Delta^m_G).$ So we take
\begin{equation}\label{eqn3}
x_k=
\begin{cases}
1, &\text{if} ~k\leq m\\
e^{k^m},&\text{if}~ k> m
\end{cases}
\end{equation}
Then $x=(1,1,1,...,1, e^{(m+1)^m}, e^{(m+2)^m},...)\in Dl^G_\infty(\Delta^m_G).$ Therefore
\begin{align*}
_G\sum^\infty_{k=1} e^{k^m}\odot|a_k|^G &=~ _G\sum^m_{k=1} e^{k^m}\odot|a_k|^G \oplus ~ _G\sum^\infty_{k=m+1} e^{k^m}\odot|a_k|^G\\
                                        &=~ _G\sum^m_{k=1} e^{k^m}\odot|a_k|^G \oplus ~ _G\sum^\infty_{k=1} |a_k\odot x_k|^G<\infty
\end{align*}
since $a_k\odot x_k=1$(the geometric zero) for $k=1, 2,...., m.$\\
Therefore $a\in U_1 .$ This implies
\begin{equation}\label{eqn4}
\left[Dl^G_\infty(\Delta^m_G)\right]^\alpha \subseteq U_1.
\end{equation}
Then from (\ref{eqn2}) and (\ref{eqn4}), we get
\[\left[Dl^G_\infty(\Delta^m_G)\right]^\alpha = U_1.\]
\end{proof}
\begin{lemma}\label{lemma7th}
\[\left[ Dl^G_\infty(\Delta^m_G)\right]^\alpha =\left[Dc^G(\Delta^m_G)\right]^\alpha.\]
\end{lemma}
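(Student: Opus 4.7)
The plan is to combine Lemma~\ref{lemma6th} with a simple dual-inclusion argument and a single explicit test sequence. Since $c^G \subseteq l^G_\infty$, we have the inclusion $Dc^G(\Delta^m_G) \subseteq Dl^G_\infty(\Delta^m_G)$, and taking $\alpha$-duals reverses inclusions (as noted in the paragraph after the definition of the $\alpha$-dual). So one direction is free:
\[
U_1 \;=\; \left[Dl^G_\infty(\Delta^m_G)\right]^\alpha \;\subseteq\; \left[Dc^G(\Delta^m_G)\right]^\alpha.
\]

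For the reverse inclusion, I would take an arbitrary $a \in \left[Dc^G(\Delta^m_G)\right]^\alpha$ and reuse the test sequence from the proof of Lemma~\ref{lemma6th}, namely
\[
x_k \;=\; \begin{cases} 1, & k \leq m,\\ e^{k^m}, & k > m.\end{cases}
\]
The key point that needs checking is that this $x$ actually lies in $Dc^G(\Delta^m_G)$, not merely in $Dl^G_\infty(\Delta^m_G)$. Once this is verified, the definition of the $\alpha$-dual gives $_G\sum_k |a_k \odot x_k|^G < \infty$, and splitting the sum at $k=m$ exactly as in Lemma~\ref{lemma6th} yields $_G\sum_k e^{k^m}\odot|a_k|^G < \infty$, i.e.\ $a \in U_1$.

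The main obstacle is thus the verification that $\Delta^m_G x \in c^G$. Here I would exploit the translation between geometric and ordinary calculus: since $x_k = e^{k^m}$ for $k > m$, and since the geometric $m$-th difference corresponds under $\ln$ to the ordinary $m$-th forward difference applied to the exponents, we have for all $k$ large enough that all of $x_k, x_{k+1}, \dots, x_{k+m}$ are of the form $e^{(k+v)^m}$, so $\Delta^m_G x_k = e^{(\Delta^m k^m)} = e^{m!}$, a constant. Hence $\Delta^m_G x_k \to e^{m!}$ in $\mathbb{C}(G)$, which means $\Delta^m_G x \in c^G$ and therefore $x \in Dc^G(\Delta^m_G)$.

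Putting these pieces together gives $\left[Dc^G(\Delta^m_G)\right]^\alpha \subseteq U_1$, and combined with the first inclusion we obtain $\left[Dl^G_\infty(\Delta^m_G)\right]^\alpha = \left[Dc^G(\Delta^m_G)\right]^\alpha$, as desired. In essence the lemma is a sandwich argument: $Dc^G(\Delta^m_G)$ sits between the bounded space and a subspace still containing the critical witness sequence $(e^{k^m})_{k>m}$, so its $\alpha$-dual has no room to differ from $U_1$.
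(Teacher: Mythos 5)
Your proposal is correct and follows essentially the same route as the paper: one inclusion from $Dc^G(\Delta^m_G)\subseteq Dl^G_\infty(\Delta^m_G)$ and duality reversing inclusions, the other from testing $a$ against the sequence of (\ref{eqn3}) and splitting the sum at $k=m$. The only difference is that you explicitly verify $\Delta^m_G x\in c^G$ (via the eventually constant $m$-th difference of the exponents $k^m$), a point the paper's proof leaves implicit; this is a welcome detail, not a deviation (up to the harmless sign $(-1)^m m!$ in the exponent).
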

\begin{proof}
Since $Dc^G(\Delta^m_G) \subseteq Dl^G_\infty(\Delta^m_G),$ hence $\left[ Dl^G_\infty(\Delta^m_G)\right]^\alpha \subseteq \left[Dc^G(\Delta^m_G)\right]^\alpha .$

Again let $a\in \left[Dc^G(\Delta^m_G)\right]^\alpha .$ Then $_G\sum_k|a_k\odot x_k|^G<\infty$ for each $x\in Dc^G(\Delta^m_G).$ If we take $x=(x_k)$ which is defined in (\ref{eqn3}), we get
\[_G\sum_k e^{k^m}\odot|a_k|^G=~ _G\sum^m_{k=1} e^{k^m}\odot|a_k|^G \oplus ~ _G\sum_k |a_k\odot x_k|^G<\infty.\]
This implies that $a\in \left[ Dl^G_\infty(\Delta^m_G)\right]^\alpha.$ Thus

 \[\left[ Dl^G_\infty(\Delta^m_G)\right]^\alpha= \left[ Dc^G(\Delta^m_G)\right]^\alpha.\]
\end{proof}
\begin{lemma}\label{lemma8th}
\begin{align*}
&(i)~ \left[l^G_\infty(\Delta^m_G)\right]^\alpha= \left[ Dl^G_\infty(\Delta^m_G)\right]^\alpha.\\
&(ii) \left[ c^G(\Delta^m_G)\right]^\alpha= \left[ Dc^G(\Delta^m_G)\right]^\alpha.
\end{align*}
\end{lemma}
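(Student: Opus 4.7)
The plan is to establish (i) directly and obtain (ii) as a short corollary using Lemma~\ref{lemma7th} and the chain of inclusions $Dc^G(\Delta^m_G) \subseteq c^G(\Delta^m_G) \subseteq l^G_\infty(\Delta^m_G)$.

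For (i), one inclusion is free: since $Dl^G_\infty(\Delta^m_G)$ is, by construction, a linear subspace of $l^G_\infty(\Delta^m_G)$, the order-reversing property of the $\alpha$-dual yields $[l^G_\infty(\Delta^m_G)]^\alpha \subseteq [Dl^G_\infty(\Delta^m_G)]^\alpha$. For the reverse inclusion, I would take $a \in [Dl^G_\infty(\Delta^m_G)]^\alpha$; by Lemma~\ref{lemma6th} this is the same as $_G\sum_k e^{k^m}\odot |a_k|^G < \infty$. Given an arbitrary $x \in l^G_\infty(\Delta^m_G)$, Corollary~\ref{corollary3} supplies a constant $M$ with $e^{k^{-m}}\odot |x_k|^G \le M$ for all $k$. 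The key algebraic move is the identity $e^{k^m}\odot e^{k^{-m}}=e$, i.e.\ that $e^{k^{-m}}$ is the $\odot$-inverse of $e^{k^m}$, so that
\[
|a_k\odot x_k|^G \;=\; |a_k|^G \odot |x_k|^G \;=\; \bigl(e^{k^m}\odot |a_k|^G\bigr)\odot\bigl(e^{k^{-m}}\odot |x_k|^G\bigr) \;\le\; M\odot\bigl(e^{k^m}\odot |a_k|^G\bigr).
\]
Summing geometrically and pulling $M$ out by $\odot$-distributivity gives $_G\sum_k |a_k\odot x_k|^G \le M\odot {_G\sum_k} e^{k^m}\odot |a_k|^G<\infty$, which places $a$ in $[l^G_\infty(\Delta^m_G)]^\alpha$.

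For (ii), combining (i) with Lemma~\ref{lemma7th} yields $[l^G_\infty(\Delta^m_G)]^\alpha=[Dl^G_\infty(\Delta^m_G)]^\alpha=[Dc^G(\Delta^m_G)]^\alpha$. The inclusions $Dc^G(\Delta^m_G)\subseteq c^G(\Delta^m_G)\subseteq l^G_\infty(\Delta^m_G)$ reverse under $\alpha$ to give $[l^G_\infty(\Delta^m_G)]^\alpha \subseteq [c^G(\Delta^m_G)]^\alpha \subseteq [Dc^G(\Delta^m_G)]^\alpha$. Since the outer two terms have just been shown equal, the middle must agree with them, finishing (ii).

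The one subtle point I expect to have to defend is the monotonicity step $(e^{k^m}\odot |a_k|^G)\odot(e^{k^{-m}}\odot |x_k|^G)\le (e^{k^m}\odot |a_k|^G)\odot M$. Since the geometric absolute values $|a_k|^G$ and $|x_k|^G$ lie in $[1,\infty)$, the factor $e^{k^m}\odot |a_k|^G$ is $\ge 1$, and $\odot$-multiplication by a base $\ge 1$ is order-preserving in the second argument (it is just exponentiation with a non-negative exponent). Once that monotonicity is granted, the rest is bookkeeping with the rules $|u\odot v|^G=|u|^G\odot |v|^G$ and geometric associativity/commutativity already listed in the introduction, so the proof reduces to the two displayed estimates above.
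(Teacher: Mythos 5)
Your proof is correct, and for part (i) it is essentially the paper's argument: one inclusion comes for free from $Dl^G_\infty(\Delta^m_G)\subseteq l^G_\infty(\Delta^m_G)$, and the reverse inclusion from the factorization $|a_k\odot x_k|^G=\bigl(e^{k^m}\odot|a_k|^G\bigr)\odot\bigl(e^{k^{-m}}\odot|x_k|^G\bigr)$ combined with Lemma~\ref{lemma6th} and Corollary~\ref{corollary3}; you simply spell out the monotonicity of $\odot$ and the extraction of the bound $M$ that the paper leaves implicit. For part (ii) you take a genuinely different (if minor) route: the paper re-runs the same analytic estimate for $a\in[Dc^G(\Delta^m_G)]^\alpha$ and $x\in c^G(\Delta^m_G)\subseteq l^G_\infty(\Delta^m_G)$ (tacitly using Lemma~\ref{lemma7th} to know that such $a$ satisfies ${_G\sum_k}\,e^{k^m}\odot|a_k|^G<\infty$), whereas you avoid any further estimation by sandwiching: $[l^G_\infty(\Delta^m_G)]^\alpha\subseteq[c^G(\Delta^m_G)]^\alpha\subseteq[Dc^G(\Delta^m_G)]^\alpha$ with the two outer sets equal by part (i) and Lemma~\ref{lemma7th}. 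Your version of (ii) is shorter and makes the logical dependence on Lemma~\ref{lemma7th} explicit rather than hidden; the paper's version is more self-contained in appearance but uses exactly the same ingredients. No gaps.
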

\begin{proof}
$(i)$ Since $Dl^G_\infty(\Delta^m_G) \subseteq l^G_\infty(\Delta^m_G),$ so $\left[l^G_\infty(\Delta^m_G)\right]^\alpha \subseteq\left[ Dl^G_\infty(\Delta^m_G)\right]^\alpha.$

Let $a \in \left[ Dl^G_\infty(\Delta^m_G)\right]^\alpha$ and $x\in l^G_\infty(\Delta^m_G).$ From Corollary (\ref{corollary3}), we have
\[_G\sum_k|a_k\odot x_k|^G= ~ _G\sum_k e^{k^m}\odot |a_k|^G \odot (e^{k^{-m}}\odot |x_k|^G)<\infty .\]
Hence $a \in \left[ l^G_\infty(\Delta^m_G)\right]^\alpha.$

$(ii) ~ Dc^G(\Delta^m_G) \subseteq c^G(\Delta^m_G)$ implies $\left[ c^G(\Delta^m_G)\right]^\alpha \subseteq \left[ Dc^G(\Delta^m_G)\right]^\alpha.$

Let $a\in \left[ Dc^G(\Delta^m_G)\right]^\alpha$ and $x\in c^G(\Delta^m_G).$ From Corollary (\ref{corollary3}), we have
\[_G\sum_k|a_k\odot x_k|^G= ~ _G\sum_k e^{k^m}\odot |a_k|^G \odot (e^{k^{-m}}\odot |x_k|^G)<\infty \]
for $x\in c^G(\Delta^m_G)\subseteq l^G_\infty(\Delta^m_G).$ Hence $a\in \left[c^G(\Delta^m_G)\right]^\alpha.$ This completes the proof.
\end{proof}
\begin{thm}\label{theorem2nd}
Let $X$ stand for $l^G_\infty$ or $c^G.$ Then
\[\left[X(\Delta^m_G)\right]^\alpha = \{a=(a_k):~ _G\sum_k e^{k^m}\odot |a_k|^G<\infty \}.\]
\end{thm}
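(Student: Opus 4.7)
The plan is to observe that Theorem~\ref{theorem2nd} is essentially a corollary of Lemmas~\ref{lemma6th}, \ref{lemma7th}, and \ref{lemma8th}, which have already been proved in the excerpt. So the entire argument reduces to a short chain of equalities, with no further analytic work required.

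First I would treat the case $X = l^G_\infty$. By Lemma~\ref{lemma8th}(i), passing to the subspace $D l^G_\infty(\Delta^m_G)$ does not change the $\alpha$-dual, i.e.\ $[l^G_\infty(\Delta^m_G)]^\alpha = [D l^G_\infty(\Delta^m_G)]^\alpha$. Then Lemma~\ref{lemma6th} identifies $[D l^G_\infty(\Delta^m_G)]^\alpha$ with the set $U_1 = \{a=(a_k): {}_G\sum_k e^{k^m}\odot |a_k|^G < \infty\}$. Chaining these two equalities delivers the claim for $l^G_\infty$.

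Next I would handle $X = c^G$ by the parallel route. Lemma~\ref{lemma8th}(ii) gives $[c^G(\Delta^m_G)]^\alpha = [D c^G(\Delta^m_G)]^\alpha$, Lemma~\ref{lemma7th} gives $[D c^G(\Delta^m_G)]^\alpha = [D l^G_\infty(\Delta^m_G)]^\alpha$, and Lemma~\ref{lemma6th} again identifies the latter with $U_1$. Composing these three equalities yields the $c^G$ case.

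There is no real obstacle here; the only thing to be careful about is bookkeeping — writing the two cases in a way that makes the role of each lemma transparent — and possibly noting that the test sequence $x_k = e^{k^m}$ (for $k>m$, else $1$) used inside Lemmas~\ref{lemma6th} and~\ref{lemma7th} lies in $Dc^G(\Delta^m_G) \subseteq D l^G_\infty(\Delta^m_G)$, so the bridge $[D l^G_\infty(\Delta^m_G)]^\alpha = [D c^G(\Delta^m_G)]^\alpha$ is genuine. After that the proof is a one-line composition of previously established identities.
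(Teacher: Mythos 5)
Your proposal is correct and follows exactly the paper's own proof: for $X=l^G_\infty$ it chains Lemma~\ref{lemma8th}(i) with Lemma~\ref{lemma6th}, and for $X=c^G$ it chains Lemma~\ref{lemma8th}(ii), Lemma~\ref{lemma7th}, and Lemma~\ref{lemma6th}. No differences worth noting.
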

\begin{proof}
\begin{align*}
\left[l^G_\infty(\Delta^m_G)\right]^\alpha &= \left[ Dl^G_\infty(\Delta^m_G)\right]^\alpha \qquad\text{by Lemma (\ref{lemma8th})}\\
                                           &= \{a=(a_k):~ _G\sum_k e^{k^m}\odot |a_k|^G<\infty \} \text{~by Lemma (\ref{lemma6th})}.
\end{align*}
Again
\begin{align*}
\left[ c^G(\Delta^m_G)\right]^\alpha &= \left[ Dc^G(\Delta^m_G)\right]^\alpha \qquad\text{by Lemma (\ref{lemma8th})}\\
																		&= \left[ Dl^G_\infty(\Delta^m_G)\right]^\alpha \qquad\text{by Lemma (\ref{lemma7th})}\\
                                     &= \{a=(a_k):~ _G\sum_k e^{k^m}\odot |a_k|^G<\infty \} \text{~by Lemma (\ref{lemma6th})}.
\end{align*}
\end{proof}
\begin{corollary}
For $X = l^G_\infty$ or $c^G,$ we have
\[\left[X(\Delta_G)\right]^\alpha = \{a=(a_k):~ _G\sum_k e^k\odot |a_k|^G<\infty \}, \text{~and}\]
\[\left[X(\Delta^2_G)\right]^\alpha = \{a=(a_k):~ _G\sum_k e^{k^2}\odot |a_k|^G<\infty \}.\]
\end{corollary}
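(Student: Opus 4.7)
The plan is to observe that this corollary is an immediate specialisation of Theorem~\ref{theorem2nd}, obtained by fixing the order of the difference operator to $m=1$ and $m=2$ respectively. No new machinery is needed; the work has already been done in Theorem~\ref{theorem2nd} (together with Lemmas~\ref{lemma6th}, \ref{lemma7th}, \ref{lemma8th}, and Corollaries~\ref{corollary1}--\ref{corollary3}), which characterises $[X(\Delta^m_G)]^\alpha$ uniformly in $m\in\mathbb{N}$ for $X\in\{l^G_\infty,\,c^G\}$.

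First I would invoke Theorem~\ref{theorem2nd} with $m=1$. Since $e^{k^{1}}=e^{k}$, the characterisation reads
\[
\bigl[X(\Delta_G)\bigr]^\alpha=\{a=(a_k):\, {_G\!\sum}_k e^{k}\odot|a_k|^G<\infty\},
\]
which is precisely the first claimed identity and, incidentally, recovers the known formula for $(s l^G_\infty(\Delta_G))^\alpha$ stated earlier in the paper.

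Next I would apply Theorem~\ref{theorem2nd} with $m=2$. Here $e^{k^{2}}$ appears verbatim in the theorem's conclusion, so
\[
\bigl[X(\Delta^2_G)\bigr]^\alpha=\{a=(a_k):\, {_G\!\sum}_k e^{k^{2}}\odot|a_k|^G<\infty\},
\]
which is the second claimed identity.

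There is no genuine obstacle: the only thing to check is that the substitutions $m=1$ and $m=2$ are admissible in Theorem~\ref{theorem2nd}, which they are since the theorem is stated for arbitrary $m\in\mathbb{N}$ and its proof chain (through the lemmas on $\sup_k e^{k^{-m}}\odot|x_k|^G<\infty$ for $x\in l^G_\infty(\Delta^m_G)$) makes no restriction on $m$. Thus the corollary follows by direct specialisation, and its statement can be regarded as a concrete illustration of the general formula in the two most commonly used cases.
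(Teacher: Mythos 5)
Your proposal is correct and follows exactly the paper's own argument: the paper proves this corollary by simply putting $m=1$ and $m=2$ in Theorem~\ref{theorem2nd}. Your additional remark that the $m=1$ case recovers the earlier formula for $\left(sl_\infty^G(\Delta_G)\right)^\alpha$ is a nice consistency check but not needed.
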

\begin{proof}
Putting $m=1$ and $m=2$ in Theorem (\ref{theorem2nd}),the results follow.
\end{proof}
\begin{thm}\label{theorem3rd}
Let $X$ stand for $l^G_\infty$ or $c^G$ and $U_2= \{a=(a_k): \sup_k e^{k^{-m}}\odot |a_k|^G<\infty \}.$ Then $\left[X(\Delta^m_G)\right]^{\alpha \alpha} = U_2.$
\end{thm}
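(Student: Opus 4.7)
The plan is to reduce the claim to a single $\alpha$-dual computation and then mimic the classical $\ell^{1}$--$\ell^{\infty}$ duality in the geometric setting. By Theorem \ref{theorem2nd},
\[
[X(\Delta^m_G)]^\alpha = U_1 := \bigl\{a=(a_k) : ~_G\sum_k e^{k^m}\odot|a_k|^G<\infty\bigr\},
\]
so $[X(\Delta^m_G)]^{\alpha\alpha}=U_1^\alpha$, and it suffices to prove $U_1^\alpha=U_2$. Throughout, it helps to pass to logarithms: writing $\alpha_k:=\ln|a_k|^G\ge 0$ and $\beta_k:=\ln|b_k|^G\ge 0$, the conditions $a\in U_1$ and $b\in U_2$ become $\sum_k k^m\alpha_k<\infty$ and $\sup_k k^{-m}\beta_k<\infty$, while the pairing $_G\sum_k|a_k\odot b_k|^G$ corresponds to $\sum_k\alpha_k\beta_k$. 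In these additive terms the theorem is exactly the weighted $\ell^{1}$--$\ell^{\infty}$ duality under the rescaling $\tilde\alpha_k=k^m\alpha_k$, $\tilde\beta_k=k^{-m}\beta_k$.

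For the inclusion $U_2\subseteq U_1^\alpha$, I would take $b\in U_2$ with $M:=\sup_k e^{k^{-m}}\odot|b_k|^G<\infty$ and exploit the identity $e^{k^m}\odot e^{k^{-m}}=e$ to factor each term of the pairing as
\[
|a_k\odot b_k|^G=\bigl(e^{k^m}\odot|a_k|^G\bigr)\odot\bigl(e^{k^{-m}}\odot|b_k|^G\bigr).
\]
All factors are geometrically $\ge 1$, so bounding the second factor by $M$ and summing geometrically gives
\[
{_G\sum_k}|a_k\odot b_k|^G \;\le\; M\odot{_G\sum_k}e^{k^m}\odot|a_k|^G \;<\;\infty
\]
whenever $a\in U_1$, i.e.\ $b\in U_1^\alpha$. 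This is a direct analogue of the estimate already used in Lemma \ref{lemma6th}.

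The content of the theorem lies in the reverse inclusion $U_1^\alpha\subseteq U_2$, which I would prove by contraposition. Assume $b\notin U_2$, so $\sup_k k^{-m}\ln|b_k|^G=\infty$; since the sup is infinite, the set of indices $k$ with $k^{-m}\ln|b_k|^G>N$ is infinite for every $N$, and we can therefore extract a strictly increasing subsequence $(k(j))_{j\ge 1}$ satisfying $k(j)^{-m}\ln|b_{k(j)}|^G\ge j^3$. I would then define a test sequence $a\in\omega(G)$ by
\[
a_k=\begin{cases} e^{1/(k(j)^m\,j^2)}, & k=k(j), \\ 1, & \text{otherwise}. \end{cases}
\]
A direct log-scale calculation gives $\sum_k k^m\ln|a_k|^G=\sum_j 1/j^2<\infty$, so $a\in U_1$, while the pairing satisfies $\sum_k\ln|a_k|^G\cdot\ln|b_k|^G\ge\sum_j(1/j^2)\cdot j^3=\sum_j j=\infty$, so $_G\sum_k|a_k\odot b_k|^G$ diverges. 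This exhibits an element of $U_1$ against which $b$ fails the $\alpha$-dual condition, hence $b\notin U_1^\alpha$.

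The main obstacle is the construction in the reverse direction: one must place mass only on indices $k(j)$ where $|b_{k(j)}|^G$ grows strictly faster than $e^{k(j)^m}$, and calibrate the amount of mass (the damping factor $1/j^2$ against the amplification $j^3$) so that $\sum_k k^m\alpha_k$ stays finite while $\sum_k\alpha_k\beta_k$ still blows up. The remaining work is routine bookkeeping between the geometric operations $\odot,\oplus,\ominus$ and their ordinary additive counterparts on logarithms.
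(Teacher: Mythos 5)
Your proof is correct and follows essentially the same route as the paper: the forward inclusion via the factorization $e^{k^m}\odot e^{k^{-m}}=e$ (exactly the estimate of Lemma \ref{lemma6th}), and the reverse inclusion by building a test sequence in $\left[X(\Delta^m_G)\right]^{\alpha}$ supported on a subsequence where $e^{k^{-m}}\odot|b_k|^G$ blows up. The only (cosmetic) difference is the calibration of that test sequence --- the paper takes $x_{k(i)}=\left(|a_{k(i)}|^G\right)^{-1_G}$ so that each pairing term equals $e$, whereas you damp by $1/j^2$ against growth $j^3$; both yield the same conclusion.
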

\begin{proof}
Let $a\in U_2$ and $x\in \left[X(\Delta^m_G)\right]^\alpha,$ then by definition of $U_2$ and by Lemma (\ref{lemma6th}), we get
\begin{align*}
 _G\sum_k|a_k\odot x_k|^G &=_G\sum_k e^{k^m}\odot|x_k|^G \odot e^{k^{-m}}\odot|a_k|^G\\
                          & \leq _G\sum_k e^{k^m}\odot|x_k|^G\odot\sup_k e^{k^{-m}}\odot|a_k|^G<\infty.
\end{align*}
Hence $a\in \left[X(\Delta^m_G)\right]^{\alpha \alpha}.$

Conversely, let $a\in \left[X(\Delta^m_G)\right]^{\alpha \alpha}$ and $a \notin U_2.$ Then we must have
\[\sup_k e^{k^{-m}}\odot|a_k|^G =\infty. \]

Hence there exists a strictly increasing sequence $(e^{k(i)})$ of geometric integers\cite{TurkmenBasar}, where $k(i)$ is a strictly increasing sequence of positive integers such that
\[e^{[{k(i)}]^{-m}}\odot|a_{k(i)}|^G> e^{i^m}.\]
Let us define the sequence $x$ by
\begin{equation*}
x_k={
\begin{cases}
\left(|a_{k(i)}|^G\right)^{-1_G},&~k=k(i)\\
1, &k \neq k(i).
\end{cases}}
\end{equation*}
where $\left(|a_{k(i)}|^G\right)^{-1_G}$ is the geometric inverse of $|a_{k(i)}|^G$ so that $|a_{k(i)}|^G \odot \left(|a_{k(i)}|^G\right)^{-1_G}=e.$\\
Then we have
\[_G\sum_ke^{k^m}\odot |x_k|^G=~_G\sum_i e^{[{k(i)}]^m}\odot\left[|a_{k(i)}|^G\right]^{-1_G}\leq e^{i^{-m}}<\infty.\]
Hence $x\in \left[X(\Delta^m_G)\right]^\alpha $ and $_G\sum_k |a_k\odot x_k|^G= \sum e=\infty.$
This is a contradiction as  $a\in \left[X(\Delta^m_G)\right]^{\alpha \alpha}.$ Hence $a\in U_2.$
\end{proof}
\begin{corollary}
For $X = l^G_\infty$ or $c^G,$ we have
\[\left[X(\Delta^2_G)\right]^{\alpha \alpha} = \{a=(a_k): \sup_k e^{k^{-2}}\odot |a_k|^G<\infty \}.\]
\end{corollary}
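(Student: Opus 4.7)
The statement is an immediate specialization of Theorem \ref{theorem3rd} to the case $m=2$, so the plan is essentially to cite that theorem and carry out the substitution. First I would observe that the set appearing on the right-hand side, namely
\[
\{a = (a_k) : \sup_k e^{k^{-2}} \odot |a_k|^G < \infty\},
\]
is exactly the set $U_2$ from Theorem \ref{theorem3rd} when the parameter $m$ is set to $2$, since $U_2 = \{a = (a_k) : \sup_k e^{k^{-m}} \odot |a_k|^G < \infty\}$ with $m = 2$ becomes the stated collection.

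Next, I would apply Theorem \ref{theorem3rd} directly, which guarantees, for $X$ being either $l^G_\infty$ or $c^G$ and for every $m \in \mathbb{N}$, the identity $[X(\Delta^m_G)]^{\alpha\alpha} = U_2$. Specializing the index $m$ to $2$ yields
\[
[X(\Delta^2_G)]^{\alpha\alpha} = \{a = (a_k) : \sup_k e^{k^{-2}} \odot |a_k|^G < \infty\},
\]
which is the desired conclusion.

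Since no new analytic content is required beyond the substitution, there is no genuine obstacle; the only thing to be careful about is to confirm that both choices $X = l^G_\infty$ and $X = c^G$ are covered, but this follows because Theorem \ref{theorem3rd} is formulated uniformly for both. Thus the proof reduces to two lines: invoke Theorem \ref{theorem3rd}, then set $m = 2$.
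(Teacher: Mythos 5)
Your proposal is correct and matches the paper's own proof, which likewise obtains the result by setting $m=2$ in Theorem \ref{theorem3rd}. Nothing further is needed.
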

\begin{proof}
In Theorem (\ref{theorem3rd}), putting $m=2$ we obtain the result.
\end{proof}
\begin{corollary}
The sequence spaces $l^G_\infty(\Delta^m_G)$ and $c^G(\Delta^m_G)$ are not perfect.
\end{corollary}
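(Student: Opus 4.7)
The plan is to combine Corollary~\ref{corollary3}, which gives $X(\Delta^m_G)\subseteq U_2$, with Theorem~\ref{theorem3rd}, which identifies $[X(\Delta^m_G)]^{\alpha\alpha}$ with $U_2$. It therefore suffices to exhibit a single sequence that lies in $U_2$ but not in $X(\Delta^m_G)$: this makes $X(\Delta^m_G)\subsetneq [X(\Delta^m_G)]^{\alpha\alpha}$, and hence $X(\Delta^m_G)$ fails to be perfect for $X\in\{l^G_\infty,c^G\}$.

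To find such a sequence I would pass to the logarithm. Writing $x_k = e^{y_k}$ with $y_k\in\mathbb{R}$, the identities $a\oplus b = ab$ and $a\ominus b = a/b$ together with $|e^z|^G = e^{|z|}$ yield $\Delta^m_G x_k = e^{\Delta^m y_k}$, where $\Delta^m$ is the ordinary forward difference, and $|x_k|^G = e^{|y_k|}$. Under this translation, membership in $U_2$ is equivalent to $|y_k| = O(k^m)$, while membership in $l^G_\infty(\Delta^m_G)$ is equivalent to $(\Delta^m y_k)$ being bounded; I thus need a real sequence whose modulus is $O(k^m)$ but whose $m$-th forward difference is unbounded.

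The candidate I would try is $y_k = (-1)^k k$, that is $x_k = e^{(-1)^k k}$. The growth condition is immediate since $|y_k|/k^m = k^{1-m}\leq 1$ for every $m\geq 1$, giving $x\in U_2$. For the difference condition, a short induction on $m$, starting from $\Delta y_k = (-1)^k(2k+1)$ and using $\Delta^{m+1} = \Delta\circ\Delta^m$, produces the closed form $\Delta^m y_k = (-1)^k\bigl(2^m k + c_m\bigr)$ for a constant $c_m$ depending only on $m$; this is manifestly unbounded in $k$, so $\Delta^m_G x\notin l^G_\infty$ and therefore $x\notin l^G_\infty(\Delta^m_G)\supseteq c^G(\Delta^m_G)$.

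The only delicate step is the bookkeeping translating $\Delta^m_G$ on $\mathbb{C}(G)$-valued sequences into the ordinary $\Delta^m$ on the log-sequence $(y_k)$, but this is immediate from the definitions of $\oplus$ and $\ominus$; I do not anticipate any real obstacle beyond that translation and the inductive formula for $\Delta^m y_k$.
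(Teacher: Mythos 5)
Your proof is correct and follows the same route as the paper: both reduce the claim to Theorem~\ref{theorem3rd}'s identification $[X(\Delta^m_G)]^{\alpha\alpha}=U_2$ together with the standard inclusion $X\subseteq X^{\alpha\alpha}$. The paper dismisses the remaining step as ``trivial,'' whereas you actually supply an explicit witness $x_k=e^{(-1)^k k}$, correctly verified (via the logarithmic translation) to lie in $U_2\setminus l^G_\infty(\Delta^m_G)\supseteq U_2\setminus c^G(\Delta^m_G)$; this is a sound completion of the detail the paper omits.
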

\begin{proof}
Proof is trivial as $X^{\alpha \alpha} \neq X$ for $X= l^G_\infty(\Delta^m_G)$ or $c^G(\Delta^m_G).$
\end{proof}
\thebibliography{00}
\bibitem{BashirovRiza} A. Bashirov, M. R\i za,  \textit{On Complex multiplicative differentiation}, TWMS J. App. Eng. Math. 1(1)(2011), 75-85.
\bibitem{BashirovMisirh} A. E. Bashirov, E. M\i s\i rl\i, Y. Tando\v{g}du, A.  \"{O}zyap\i c\i, \textit{On modeling with multiplicative differential equations}, Appl. Math. J. Chinese Univ., 26(4)(2011), 425-438.
\bibitem{BashirovKurpinar} A. E. Bashirov, E. M. Kurp\i nar, A. \"{O}zyapici,   \textit{Multiplicative Calculus and its applications}, J. Math. Anal. Appl., 337(2008), 36-48.
\bibitem{KhirodBipan} Khirod Boruah and Bipan Hazarika, \textit{Application of Geometric Calculus in Numerical Analysis and Difference Sequence Spaces}, arXiv1603.09479v1, 31 May,2016.
\bibitem{CakmakBasar} A. F. \c{C}akmak, F.  Ba\c{s}ar,  \textit{On Classical sequence spaces and non-Newtonian calculus}, J. Inequal. Appl. 2012, Art. ID 932734, 12pp.
\bibitem{Garling67} D. J. H. Garling, \textit{The $\beta$- and $\gamma$-duality of sequence spaces}, Proc. Camb. Phil. Soc., 63(1967), 963-981.
\bibitem{Grossman83} M. Grossman, \textit{Bigeometric Calculus: A System with a scale-Free Derivative}, Archimedes Foundation, Massachusetts, 1983.
\bibitem{GrossmanKatz} M. Grossman, R. Katz, \textit{Non-Newtonian Calculus}, Lee Press, Piegon Cove, Massachusetts, 1972.


\bibitem{KadakEfe} U. Kadak and Hakan Efe, \textit{Matrix Transformation between Certain Sequence Spaces over the Non-Newtonian Complex Field}, The Scientific World Journal, Volume 2014, Article ID 705818, 12 pages.
\bibitem{kadak2} U. Kadak, Murat Kiri\c{s}\c{c}i  and A.F. \c{C}akmak \textit{On the classical paranormed
sequence spaces and related duals over the non-Newtonian complex field}
J. Function Spaces Appl., 2015
\bibitem{Kadak} U. Kadak, \textit{Determination of K\"{o}the-Toeplitz duals over non-Newtonian Complex Field}, The Scientific World Journal, Volume 2014, Article ID 438924, 10 pages.

\bibitem{Kizmaz} H. Kizmaz, \textit{On Certain Sequence Spaces}, Canad. Math. Bull., 24(2)(1981), 169-176.

\bibitem{KotheToplitz69} G. K\"{o}the, Toplitz, \textit{Vector Spaces I}, Springer-Verlag, 1969.
\bibitem{KotheToplitz34} G. K\"{o}the, O. Toplitz, \textit{Linear Raume mit unendlichen koordinaten und Ring unendlichen Matrizen}, J. F. Reine u. angew Math., 171(1934), 193-226.

\bibitem{Maddox80} I.J. Maddox, \textit{Infinite Matrices of Operators}, Lecture notes in Mathematics, 786, Springer-Verlag(1980).

\bibitem{MikailColak} Mikail Et and Rifat \c{C}olak, \textit{On Some Generalized Difference Sequence Spaces}, Soochow J. Math., 21(4), 377-386. 
\bibitem{MikailColak97}R.\c{C}olak, M.Et, \textit{On some generalized difference sequence spaces and related matrix
transformations}, Hokkaido mathematical Journal Vol.26 (1997), p.483-492
\bibitem{Stanley}
 D. Stanley, \textit{A multiplicative calculus, Primus} IX 4 (1999) 310-326.
 \bibitem{TekinBasar} S. Tekin, F. Ba\c{s}ar,   \textit{Certain Sequence spaces over the non-Newtonian complex field}, Abstr. Appl. Anal., 2013. Article ID 739319, 11 pages. 
 
 \bibitem{TurkmenBasar} Cengiz T\"{u}rkmen and F. Ba\c{s}ar, \textit{Some Basic Results on the sets of Sequences with Geometric Calculus}, Commun. Fac. Fci. Univ. Ank. Series A1. Vol G1. No 2(2012) Pages 17-34. 
 
\bibitem{Uzer10} A. Uzer, \textit{Multiplicative type Complex Calculus as an alternative to the classical calculus}, Comput. Math. Appl., 60(2010), 2725-2737.

\end{document}